\newtheorem{thm}{Theorem}[section]
\newtheorem{df}{Definition}[section]
\numberwithin{equation}{section}
\newtheorem{rmk}{Remark}[section]
\newtheorem{prop}{Proposition}[section]
\newtheorem{lm}{Lemma}[section]
\def\@setauthors{%
	\begingroup
	\def\thanks{\protect\thanks@warning}%
	\trivlist
	\centering\footnotesize \@topsep30\p@\relax
	\advance\@topsep by -\baselineskip
	\item\relax
	\author@andify\authors
	\def\\{\protect\linebreak}%
	\authors%
	\ifx\@empty\contribs
	\else
	,\penalty-3 \space \@setcontribs
	\@closetoccontribs
	\fi
	\endtrivlist
	\endgroup
}
\date{}
\begin{document}
	
	\title[Insensitizing controls for stochastic parabolic equations]{Insensitizing controls for stochastic parabolic equations with dynamic boundary conditions}
	
	\author[M. Baroun]{\large Mahmoud Baroun$^1$}
	\address{$^1$Cadi Ayyad University, Faculty of Sciences Semlalia, LMDP, Marrakesh, Morocco.}
	\email{m.baroun@uca.ac.ma}
	\email{abdoelgrou@gmail.com}
	
	\author[S. Boulite]{\large Said Boulite$^2$}
	\address{$^2$Cadi Ayyad University, National School of Applied Sciences, LMDP, Marrakesh, Morocco.}
	\email{s.boulite@uca.ma}
	
	\author[A. Elgrou]{\large Abdellatif Elgrou$^1$}

	\author[O. Oukdach]{\large Omar Oukdach$^3$}
	\address{$^3$Moulay Ismaïl University of Meknes, FST Errachidia,  PMA Laboratory, BP 50, Boutalamine, Errachidia, Morocco.}
	\email{omar.oukdach@gmail.com} 
	
	\dedicatory{ \large Dedicated to the memory of Professor Hammadi Bouslous}
	
	\begin{abstract}
		In this paper, we establish the existence of insensitizing controls for forward linear stochastic parabolic equations with dynamic boundary conditions. We begin by reducing the insensitizing control problem to a classical controllability property of a cascade system of coupled forward-backward stochastic parabolic equations. Next, we derive a suitable observability inequality for the adjoint system using a new Carleman estimate. Finally, employing the classical duality arguments, we solve the considered control problem.
	\end{abstract}
	\maketitle
	\smallskip
	\textbf{Keywords: }{Insensitizing controls, stochastic parabolic equations, Carleman estimates, dynamic boundary conditions.}

	\section{Introduction and Main Results}
	Among the well-known control problems in the literature, insensitizing control is one of the most interesting. In this type of problem, the goal is to find a control (or multiple controls) that makes an energy functional locally invariant with respect to small perturbations or measurement errors, such as those in the given initial or boundary values. In this paper, we are interested in analyzing an insensitizing control problem with respect to perturbations of initial values for stochastic parabolic equations with dynamic boundary conditions.
	
	To formulate the problem under consideration, we need to introduce  some necessary notations. Let $T>0$, $G\subset\mathbb{R}^N$ be a given nonempty bounded domain with a smooth boundary $\Gamma=\partial G$, $N\geq2$. Take $G_0$ and $\mathcal{O}$ are some given nonempty open subset of $G$ such that 
	$G_0\cap\mathcal{O}\neq\emptyset$, and $\mathcal{O}_\Gamma$ is a nonempty open subset of $\Gamma$. We indicate by $\chi_{A}$ the characteristic function of a set $A$. Put 
	$$Q=(0,T)\times G, \,\quad \Sigma=(0,T)\times\Gamma,\,\quad \textnormal{and}\,\quad Q_0=(0,T)\times G_0.$$
	
	Let $(\Omega,\mathcal{F},\{\mathcal{F}_t\}_{t\geq0},\mathbb{P})$ be a fixed complete filtered probability space on which a one-dimensional standard Brownian motion $W(\cdot)$ is defined such that $\{\mathcal{F}_t\}_{t\geq0}$ is the natural filtration generated by $W(\cdot)$ and augmented by all the $\mathbb{P}$-null sets in $\mathcal{F}$. Let $H$ be a Banach space, and let $C([0,T];H)$ be the Banach space of all $H$-valued continuous functions defined on $[0,T]$; and for some sub-sigma algebra $\mathcal{G}\subset\mathcal{F}$, we denote by $L^2_{\mathcal{G}}(\Omega;H)$ the Banach space of all $H$-valued $\mathcal{G}$-measurable random variables $X$ such that $\mathbb{E}\vert X\vert_H^2<\infty$, with the canonical norm; and by $L^2_\mathcal{F}(0,T;H)$ the Banach space consisting of all $H$-valued $\{\mathcal{F}_t\}_{t\geq0}$-adapted processes $X(\cdot)$ such that $\mathbb{E}(\vert X(\cdot)\vert^2_{L^2(0,T;H)})<\infty$, with the canonical norm; and by $L^\infty_\mathcal{F}(0,T;H)$ the Banach space consisting of all $H$-valued $\{\mathcal{F}_t\}_{t\geq0}$-adapted essentially bounded processes; and by $L^2_\mathcal{F}(\Omega;C([0,T];H))$ the Banach space consisting of all $H$-valued $\{\mathcal{F}_t\}_{t\geq0}$-adapted continuous processes $X(\cdot)$ such that $\mathbb{E}(\vert X\vert^2_{C([0,T];H)})<\infty$, with the canonical norm.  By considering the Lebesgue measure \( dx \) in \( G \) and the surface measure \( d\sigma \) on \( \Gamma \), we define the bulk-surface space \( \mathbb{L}^2 \) as 
	\[
	\mathbb{L}^2 := L^2(G, dx) \times L^2(\Gamma, d\sigma).
	\]
	Equipped with the inner product
	\[
	\langle (y, y_\Gamma), (z, z_\Gamma) \rangle_{\mathbb{L}^2} = (y, z)_{L^2(G)} + (y_\Gamma, z_\Gamma)_{L^2(\Gamma)},
	\]
	\( \mathbb{L}^2 \) is a Hilbert space.\\
	
	We consider the following forward stochastic parabolic equation subject to dynamic boundary conditions:
	\begin{equation}\label{ass15}
		\begin{cases}
			\begin{array}{ll}
				dy - \triangle y \,dt = (\xi_1+a_1 y+\chi_{G_0}u) \,dt + (a_2y+v_1) \,dW(t) &\textnormal{in}\,\,Q,\\
				dy_\Gamma-\triangle_\Gamma y_\Gamma \,dt+\partial_\nu y \,dt = (\xi_2+b_1y_\Gamma)\,dt+(b_2y_\Gamma+v_2) \,dW(t) &\textnormal{on}\,\,\Sigma,\\
				y_\Gamma(t,x)=y\vert_\Gamma(t,x) &\textnormal{on}\,\,\Sigma,\\
				(y,y_\Gamma)\vert_{t=0}=(y_0+\tau_1\widehat{y}_0,y_{0,\Gamma}+\tau_2\widehat{y}_{0,\Gamma})&\textnormal{in}\,\,G\times\Gamma.
			\end{array}
		\end{cases}
	\end{equation} 
	In equation \eqref{ass15}, $y\vert_\Gamma$ denotes the trace of the function $y$, and $\partial_\nu y = (\nabla y \cdot \nu)\vert_{\Sigma}$ is the normal derivative of $y$, where $\nu$ is the outer unit normal vector at the boundary $\Gamma.$ $\triangle_\Gamma$ is the Beltrami laplacian. The  potentials $a_1$, $a_2$, $b_1$ and $b_2$ are assumed to be bounded i.e.,
	$$a_1, a_2\in L_\mathcal{F}^\infty(0,T;L^\infty(G)), \qquad b_1, b_2\in L_\mathcal{F}^\infty(0,T;L^\infty(\Gamma)).$$ 
	The random variable $(y_0,y_{0,\Gamma})\in L^2_{\mathcal{F}_0}(\Omega;\mathbb{L}^2)$ is a given initial data; $\xi_1\in L^2_\mathcal{F}(0,T;L^2(G))$, $\xi_2\in L^2_\mathcal{F}(0,T;L^2(\Gamma))$ are the known heat source terms. We assume that $(\widehat{y}_0,\widehat{y}_{0,\Gamma})\in L^2_{\mathcal{F}_0}(\Omega;\mathbb{L}^2)$ is unknown with $|(\widehat{y}_0,\widehat{y}_{0,\Gamma})|_{L^2_{\mathcal{F}_0}(\Omega;\mathbb{L}^2)}=1$ and  $\tau_1,\tau_2\in\mathbb{R}$ are unknown small parameters. In this way, $\tau_1 \widehat{y}_0$ and $\tau_1 \widehat{y}_{0,\Gamma}$ can be regarded as the small perturbations of the initial data $y_0$ and $y_{0,\Gamma}$, respectively.  The control function consists of the triple
	$$(u,v_1,v_2) \in L^2_\mathcal{F}(0,T;L^2(G_0))\times L^2_\mathcal{F}(0,T;L^2(G))\times L^2_\mathcal{F}(0,T;L^2(\Gamma)).$$
	In contrast to the deterministic case, the extra controls $v_1$ and $v_2$ are needed. This is due to some well-known technical issues when dealing with the controllability of forward stochastic parabolic equations. We refer to Remark \ref{rmqq1.33} for a discussion of the possibility to consider only one control.\\
	
	Define the following energy functional for equation \eqref{ass15}:
	\begin{align}\label{functioPhii1.2}
		\Phi(y,y_\Gamma)=\frac{1}{2}\mathbb{E}\int_0^T\int_{\mathcal{O}} |y|^2 \,dx\,dt+\frac{1}{2}\mathbb{E}\int_0^T\int_{\mathcal{O}_\Gamma} |y_\Gamma|^2 \,d\sigma\,dt,
	\end{align}
	where $(y,y_\Gamma)=(y(t,x;\tau_1,\tau_2,u,v_1,v_2),y_\Gamma(t,x;\tau_1,\tau_2,u,v_1,v_2))$ is the solution of \eqref{ass15} associated to the parameters $\tau_1$, $\tau_2$, and the controls $u, v_1$ and $v_2$.
	The problem of
	insensitizing means that we are seeking controls $(u, v_1, v_2)$ such that
	the energy  $\Phi$ is locally invariant for small perturbations in the initial data. More precisely, the  insensitizing control problem for  \eqref{ass15} is defined as follows.
	\begin{df}\label{deff1}
		For any $\xi_1\in L^2_\mathcal{F}(0,T;L^2(G))$, $\xi_2\in L^2_\mathcal{F}(0,T;L^2(\Gamma))$, and $(y_0,y_{0,\Gamma})\in L^2_{\mathcal{F}_0}(\Omega;\mathbb{L}^2)$, a control triple $(u,v_1,v_2)$ is said to insensitize the functional $\Phi$ if
		\begin{align}\label{inspb}
			\frac{\partial \Phi(y,y_\Gamma)}{\partial\tau_1}\Bigg|_{\tau_1=\tau_2=0} = 0 \quad \text{and} \quad 
			\frac{\partial \Phi(y,y_\Gamma)}{\partial\tau_2}\Bigg|_{\tau_1=\tau_2=0} = 0,
		\end{align}
		for all $(\widehat{y}_0,\widehat{y}_{0,\Gamma})\in L^2_{\mathcal{F}_0}(\Omega;\mathbb{L}^2)$ satisfying 
		\[
		|(\widehat{y}_0,\widehat{y}_{0,\Gamma})|_{L^2_{\mathcal{F}_0}(\Omega;\mathbb{L}^2)} = 1.
		\]
	\end{df}
	
	The main result of this paper is the existence of insensitizing controls for \eqref{ass15}, stated as follows.
	\begin{thm}\label{thmm1.3ins}
		Assume that $G_0\cap\mathcal{O}\neq\emptyset$ and $(y_0,y_{0,\Gamma})=(0,0)$. Then, there exist constants $M > 0$ and $C > 0$ depending only on $G$, $G_0$, $\mathcal{O}$, $T$, $a_1$, $a_2$, $b_1$, and $b_2$ such that for any 
		\[
		\xi_1\in L^2_\mathcal{F}(0,T;L^2(G)), \quad \xi_2\in L^2_\mathcal{F}(0,T;L^2(\Gamma)),
		\]
		satisfying 
		\begin{align}
			\left|\mathrm{exp}(Mt^{-1})\xi_1\right|_{L^2_\mathcal{F}(0,T;L^2(G))} + \left|\mathrm{exp}(Mt^{-1})\xi_2\right|_{L^2_\mathcal{F}(0,T;L^2(\Gamma))} < \infty,
		\end{align}
		there exists a control triple 
		\[
		(u,v_1,v_2) \in L^2_\mathcal{F}(0,T;L^2(G_0))\times L^2_\mathcal{F}(0,T;L^2(G))\times L^2_\mathcal{F}(0,T;L^2(\Gamma)),
		\]
		which insensitizes the functional $\Phi$ in the sense of Definition \ref{deff1}. Moreover
		\begin{align}\label{estiof controls}
			\begin{aligned}
				&|u|_{L^2_\mathcal{F}(0,T;L^2(G_0))} + |v_1|_{L^2_\mathcal{F}(0,T;L^2(G))} + |v_2|_{L^2_\mathcal{F}(0,T;L^2(\Gamma))} \\
				&\leq C\left(\left|\mathrm{exp}(Mt^{-1})\xi_1\right|_{L^2_\mathcal{F}(0,T;L^2(G))} + \left|\mathrm{exp}(Mt^{-1})\xi_2\right|_{L^2_\mathcal{F}(0,T;L^2(\Gamma))}\right).
			\end{aligned}
		\end{align}
	\end{thm}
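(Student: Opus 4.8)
The plan is to follow the now-standard three-step strategy for insensitizing controls, adapted to the forward stochastic setting with dynamic boundary conditions.

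\textbf{Step 1 (Reduction to a cascade system).} Since \eqref{ass15} is linear and $(y_0,y_{0,\Gamma})=(0,0)$, I would differentiate the solution map with respect to the perturbation parameters. Writing $(z_i,z_{i,\Gamma})=\partial_{\tau_i}(y,y_\Gamma)|_{\tau_1=\tau_2=0}$, each pair solves the homogeneous (source- and control-free) version of \eqref{ass15} with initial datum $(\widehat y_0,0)$ for $i=1$ and $(0,\widehat y_{0,\Gamma})$ for $i=2$, while $(y^0,y^0_\Gamma)$ denotes the solution at $\tau_1=\tau_2=0$. Differentiating \eqref{functioPhii1.2} gives $\partial_{\tau_1}\Phi|_0=\mathbb{E}\int_0^T\!\int_{\mathcal O}y^0 z_1\,dx\,dt+\mathbb{E}\int_0^T\!\int_{\mathcal O_\Gamma}y^0_\Gamma z_{1,\Gamma}\,d\sigma\,dt$, and analogously for $\tau_2$. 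To eliminate the unknown $(z_i,z_{i,\Gamma})$, I introduce a backward adjoint quadruple $(z,z_\Gamma,Z,Z_\Gamma)$ solving the backward stochastic system with terminal datum $0$ and source $(\chi_{\mathcal O}y^0,\chi_{\mathcal O_\Gamma}y^0_\Gamma)$. Applying It\^o's formula to $\langle z,z_1\rangle_{\mathbb{L}^2}$ and the bulk--surface Green formula (so that the $\partial_\nu$ and $\triangle_\Gamma$ boundary terms cancel against the dynamic boundary equations) yields $\partial_{\tau_1}\Phi|_0=\mathbb{E}\int_G z(0)\widehat y_0\,dx$ and $\partial_{\tau_2}\Phi|_0=\mathbb{E}\int_\Gamma z_\Gamma(0)\widehat y_{0,\Gamma}\,d\sigma$. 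As these must vanish for every normalized $(\widehat y_0,\widehat y_{0,\Gamma})$, the insensitizing condition \eqref{inspb} is \emph{equivalent} to the partial null-controllability requirement $(z(0),z_\Gamma(0))=(0,0)$, $\mathbb P$-a.s., for the coupled forward--backward cascade formed by the $(y^0,y^0_\Gamma)$- and $(z,z_\Gamma)$-equations.

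\textbf{Step 2 (Dual observability).} I would then characterize this controllability by duality. The adjoint of the cascade is a forward--backward system: a forward pair $(q,q_\Gamma)$ with free initial datum $(q_0,q_{0,\Gamma})$ (dual to the target $(z(0),z_\Gamma(0))$) and no coupling source, together with a backward quadruple $(p,p_\Gamma,P,P_\Gamma)$ with terminal datum $0$ and coupling source $(\chi_{\mathcal O}q,\chi_{\mathcal O_\Gamma}q_\Gamma)$. Matching the controls $u,v_1,v_2$ with their dual observations, the controllability of the cascade is equivalent to the observability inequality
\[
|(q_0,q_{0,\Gamma})|_{\mathbb{L}^2}^2\le C\Big(\mathbb{E}\!\int_{Q_0}\!|p|^2\,dx\,dt+\mathbb{E}\!\int_Q\!|P|^2\,dx\,dt+\mathbb{E}\!\int_\Sigma\!|P_\Gamma|^2\,d\sigma\,dt\Big),
\]
in which only $p$ on $G_0$ and the diffusion terms $P,P_\Gamma$ are observed.

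\textbf{Step 3 (Carleman estimate).} The core of the argument---and the step I expect to be the main obstacle---is proving this observability via the new Carleman estimate. I would apply a global Carleman estimate, with a weight singular at $t=0$ and $t=T$, simultaneously to the forward pair $(q,q_\Gamma)$ and to the backward quadruple $(p,p_\Gamma,P,P_\Gamma)$, controlling the boundary integrals produced by $\partial_\nu$ and $\triangle_\Gamma$ through the dynamic boundary conditions. The forward estimate bounds $(q,q_\Gamma)$ globally by a local integral of $q$ over a subregion $\omega\Subset G_0\cap\mathcal O$; the backward one bounds $(p,P,p_\Gamma,P_\Gamma)$ by the observations of $p$ on $G_0$ and of $P,P_\Gamma$, plus the coupling term $\|\chi_{\mathcal O}q\|$. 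The hypothesis $G_0\cap\mathcal O\neq\emptyset$ is exactly what lets me place $\omega$ inside both regions and use the $p$-equation on $\omega$---where $\chi_{\mathcal O}q=q$ and where $p$ is observed---to absorb the local $q$-integral into the right-hand side, closing the estimate; a standard energy/dissipation estimate then transfers the resulting global bound on $(q,q_\Gamma)$ to the initial energy $|(q_0,q_{0,\Gamma})|_{\mathbb{L}^2}^2$. The delicate points are the consistency of the bulk and surface Carleman weights, the treatment of the normal-derivative trace terms, and the absorption of the It\^o correction terms, all handled by the Carleman estimate established earlier.

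\textbf{Step 4 (Controls and weighted bound).} Finally, I would recover the controls by the classical duality/penalization method: minimizing a quadratic functional built on the adjoint system, whose coercivity is guaranteed by the observability inequality, produces $(u,v_1,v_2)$ realizing $(z(0),z_\Gamma(0))=(0,0)$ together with the bound \eqref{estiof controls}. The weight $\mathrm{exp}(Mt^{-1})$ on $\xi_1,\xi_2$ arises because the target is imposed at $t=0$, where the Carleman weight blows up; the source enters $(z,z_\Gamma)$ through $(y^0,y^0_\Gamma)$, so finiteness of the control cost forces $\xi_1,\xi_2$ to decay like the inverse Carleman weight near $t=0$, which is precisely the stated admissibility condition.
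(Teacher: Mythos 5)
Your Steps 1, 3 (the Carleman mechanism), and 4 match the paper's strategy: the reduction to the cascade system and the equivalence with $(z(0),z_\Gamma(0))=(0,0)$ is exactly Proposition \ref{proposs11}, and the idea of absorbing the local $q$-integral from the forward Carleman estimate by exploiting the coupling term $\chi_{\mathcal O}q$ in the $p$-equation on a set $\omega\Subset G_0\cap\mathcal O$ is precisely what the paper does (via It\^o's formula applied to $\zeta\theta^2\gamma^3pq$ with a cut-off $\zeta$), leading to the coupled Carleman estimate \eqref{carlestcasca}.

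There is, however, a genuine error in Step 2 and at the end of Step 3. The observability inequality you propose, $|(q_0,q_{0,\Gamma})|_{\mathbb{L}^2}^2\le C\bigl(\mathbb{E}\int_{Q_0}p^2+\mathbb{E}\int_Q P^2+\mathbb{E}\int_\Sigma \widehat P^2\bigr)$, is not the correct dual statement and cannot hold. The pair $(q,q_\Gamma)$ solves a \emph{forward} parabolic equation, so energy estimates propagate forward in time: $|(q(t_2),q_\Gamma(t_2))|\le C|(q(t_1),q_\Gamma(t_1))|$ for $t_1<t_2$. The ``standard energy/dissipation estimate'' you invoke to transfer the Carleman bound on $\int\!\!\int q^2$ (which is supported away from $t=0$ because the weight $\theta$ vanishes there) back to the initial energy $|(q_0,q_{0,\Gamma})|^2$ goes in the wrong direction; by the smoothing of the forward flow, a highly oscillatory $q_0$ of unit norm produces arbitrarily small interior-in-time observations of $q$ and hence of $p,P,\widehat P$, so your inequality fails. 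Moreover, if it were true it would render the hypothesis $|e^{Mt^{-1}}\xi_i|<\infty$ superfluous and would contradict de~Teresa's counterexample on non-insensitizable initial data. What is actually needed (and what the paper proves in Proposition \ref{propo5.1obseine}) is the \emph{weighted} observability of the quantities paired with the sources in the duality identity, namely
\begin{align*}
\mathbb{E}\int_Q e^{-Mt^{-1}}p^2\,dx\,dt+\mathbb{E}\int_\Sigma e^{-Mt^{-1}}p_\Gamma^2\,d\sigma\,dt\le C\left(\mathbb{E}\int_{Q_0}p^2\,dx\,dt+\mathbb{E}\int_Q P^2\,dx\,dt+\mathbb{E}\int_\Sigma \widehat P^2\,d\sigma\,dt\right),
\end{align*}
obtained from \eqref{carlestcasca} together with energy estimates used in the admissible directions (forward for $q$ on $(T/4,3T/4)\to(T/2,T)$, backward for $p$ from its zero terminal datum). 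This weighted bound on $(p,p_\Gamma)$ is exactly what makes the linear functional $(\chi_{G_0}p,P,\widehat P)\mapsto -\mathbb{E}\int_Q p\xi_1-\mathbb{E}\int_\Sigma p_\Gamma\xi_2$ continuous for $\xi_i$ in the stated weighted class, after which your Hahn--Banach/Riesz (or penalization) argument in Step 4 closes the proof as in the paper. With Step 2 corrected in this way, your outline coincides with the paper's proof.
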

	\vspace{0.3cm}
	Insensitivity properties for dynamic systems are the subject of extensive research in the literature. In the context of deterministic parabolic equations with static boundary conditions, this subject has been extensively studied. To our knowledge, the seminal work by J.-L. Lions in \cite{lions1989quel} was the first to address this question. Since then, many studies have followed, including \cite{bodafabre95,BodarBurgosperez,BodarBurgosPerez2004,BodarBurgosPerez04NonAnalysis,BodaGnPer,Tereza2000,Tereza97Esaim}, among others. In particular, \cite{Tereza2000} demonstrated that the existence of insensitizing controls is not guaranteed for all initial data, while \cite{Tere_idenfication} investigated the class of initial data that can be insensitized. Additionally, \cite{gureSiam07} considered a functional involving the gradient of the state for the linear heat equation. For parabolic equations with dynamic boundary conditions, the insensitizing control problems are not sufficiently addressed. As far as we know, the only existing papers in this context are \cite{sancarrmora,zhaYinGao}.
	
	We point out that the previously mentioned articles focus on deterministic equations. In the stochastic setting, to the best of our knowledge, \cite{liu2014global,liu2019carleman,yansun2011} are the only papers addressing insensitivity properties for the stochastic heat equation with Dirichlet boundary conditions. Accordingly, the present paper aims to investigate, for the first time, the question of insensitizing controls for stochastic parabolic equations with dynamic boundary conditions.
	
	Recently, stochastic parabolic equations with dynamic boundary conditions have gained attention in various contexts. For some results on controllability problems, see \cite{elgrou22SPEwithDBC,BackSPEwithDBC,elgrou1D23}. Multi-objective control problems have been discussed in \cite{BEMOstoch}, while inverse problems are treated in \cite{InvStoch}. For results on inverse problems in various contexts, see for instance \cite{ACMO20,kiran,Yamam2009invePrb} and references therein. Additionally, some related works in the deterministic setting include \cite{BoMaOuCost,BoMaOuNash,khoutaibi2020null1,maniar2017null,Spectral,BoMaOuNash2}.\\
	
	We now provide some remarks.
	\begin{rmk}
		Notice that the insensitivity control problem \eqref{inspb} has been established in Theorem \ref{thmm1.3ins} under the initial data $(y_0,y_{0,\Gamma})=(0,0)$. In \cite{Tereza2000}, the author proved that when $G_0\Subset G$, there exists an initial data $(y_0,y_{0,\Gamma})\in L^2_{\mathcal{F}_0}(\Omega;\mathbb{L}^2)$ such that the functional $\Phi$ given in \eqref{functioPhii1.2} cannot be insensitized. Therefore, in order to provide a positive answer to \eqref{inspb} for every $(y_0,y_{0,\Gamma})\in L^2_{\mathcal{F}_0}(\Omega;\mathbb{L}^2)$, we can modify the functional $\Phi$ as follows: For any $t_0>0$, we consider 
		\begin{align*}
			\Phi_{t_0}(y,y_\Gamma)=\frac{1}{2}\mathbb{E}\int_{t_0}^T\int_{\mathcal{O}} |y|^2 \,dx\,dt+\frac{1}{2}\mathbb{E}\int_{t_0}^T\int_{\mathcal{O}_\Gamma} |y_\Gamma|^2 \,d\sigma\,dt.
		\end{align*}
		Using some ideas from the proof of \cite[Corollary 1]{Tereza2000} and Theorem \ref{thmm1.3ins}, we have the following result: Assume $G_0\cap\mathcal{O}\neq\emptyset$, and let $\xi_1\in L^2_\mathcal{F}(0,T;L^2(G))$ and $\xi_2\in L^2_\mathcal{F}(0,T;L^2(\Gamma))$ be such that $\xi_1(t,x)=\xi_2(t,x)=0$ for $t\in(0,t_0)$. Then, for every $(y_0,y_{0,\Gamma})\in L^2_{\mathcal{F}_0}(\Omega;\mathbb{L}^2)$, there exists a control triple $(u,v_1,v_2) \in L^2_\mathcal{F}(0,T;L^2(G_0))\times L^2_\mathcal{F}(0,T;L^2(G))\times L^2_\mathcal{F}(0,T;L^2(\Gamma))$
		that insensitizes the functional $\Phi_{t_0}$.
	\end{rmk}
	
	\begin{rmk}
		The geometric condition $G_0\cap\mathcal{O}\neq\emptyset$ is imposed as a sufficient condition in Theorem \ref{thmm1.3ins} to establish the insensitivity control property \eqref{inspb}. However, it is also of significant interest to study the case when $G_0\cap\mathcal{O}=\emptyset$. For further discussion and remarks in this direction, see \cite{ErveLissPriva}.
	\end{rmk}
	
	\begin{rmk}\label{rmqq1.33}
		In equation \eqref{ass15}, we have acted with one localized control $u$ and two extra controls $v_1$ and $v_2$, which are needed due to the classical controllability problem for forward stochastic parabolic equations (see \cite{elgrou22SPEwithDBC,lu2021mathematical,tang2009null}), and equivalently, to the observability problem of the (adjoint) backward equations. Thus, it is  important to study the case when we act only with the control $u$ on equation \eqref{ass15} to insensitize the functional $\Phi$. In this state of mind, it is meaningful to try the spectral approach as developed in \cite{withouextra,lu2011some,angZhong16}. 
	\end{rmk}

	This paper is organized as follows. In Section \ref{sec03}, we present the well-posedness results for our equations. In Section \ref{sec3}, we reduce the insensitizing control problem to a classical controllability problem for a coupled system of forward-backward stochastic parabolic equations. In Section \ref{sec4}, we establish a global Carleman estimate for the adjoint coupled system. Section \ref{sec5} is devoted to proving our main result, i.e., Theorem \ref{thmm1.3ins}.
	\section{Functional setting and well-posedness}\label{sec03}In this section, we provide the well-posedness results of the forward and backward stochastic parabolic equations with dynamic boundary conditions. Let us first define some useful differential operators on $\Gamma$. Firstly, the tangential gradient of a function \(y_\Gamma: \Gamma \rightarrow \mathbb{R}\) is defined by
	\[
	\nabla_{\Gamma} y_\Gamma = \nabla y - \partial_{\nu} y \, \nu,
	\]
	where \(y\) is an extension of \(y_\Gamma\) up to an open neighborhood of \(\Gamma\). Secondly, the tangential divergence of a function \(y_\Gamma: \Gamma \rightarrow \mathbb{R}^N\), such that \(y = y_\Gamma\) on \(\Gamma\), is defined as
	\[
	\operatorname{div}_{\Gamma} (y_\Gamma) = \operatorname{div}(y) - \nabla y \,\nu  \cdot \nu,
	\]
	where \(\nabla y\) is the standard gradient of \(y\). Then, we define the Laplace-Beltrami operator of a function \(y_\Gamma: \Gamma \rightarrow \mathbb{R}\) as follows:
	\[
	\Delta_{\Gamma} y_\Gamma = \operatorname{div}_{\Gamma} (\nabla_{\Gamma} y_\Gamma).
	\]
	As for the standard Sobolev spaces $H^1(G)$ and $H^2(G)$, we  define the following Sobolev spaces $H^1(\Gamma)$ and $H^2(\Gamma)$ on $\Gamma$ by 
	$$H^1(\Gamma)=\Big\{y_\Gamma\in L^2(\Gamma),\;\;\nabla_\Gamma y_\Gamma\in L^2(\Gamma;\mathbb{R}^N)\Big\},\quad H^2(\Gamma)=\Big\{y_\Gamma\in L^2(\Gamma),\;\;\nabla_\Gamma y_\Gamma\in H^1(\Gamma;\mathbb{R}^N)\Big\},$$
	endowed with the following norms respectively
	$$ |y_\Gamma|_{H^1(\Gamma)}= \sqrt{\langle y_\Gamma,y_\Gamma \rangle_{H^1(\Gamma)}}, \quad\text{with} \quad\,\,\langle y_\Gamma,z_\Gamma \rangle_{H^1(\Gamma)}= \int_{\Gamma}y_\Gamma z_\Gamma \,d\sigma + \int_{\Gamma}\nabla_{\Gamma} y_\Gamma\cdot \nabla_{\Gamma}z_\Gamma \,d\sigma, \quad $$
	and 
	$$ |y_\Gamma|_{H^2(\Gamma)}= \sqrt{\langle y_\Gamma,y_\Gamma \rangle_{H^2(\Gamma)}},\quad \text{with}\quad\,\, \langle y_\Gamma,z_\Gamma \rangle_{H^2(\Gamma)}= \int_{\Gamma}y_\Gamma z_\Gamma \,d\sigma +\int_{\Gamma}\Delta_{\Gamma} y_\Gamma\, \Delta_{\Gamma}z_\Gamma \,d\sigma.$$              
	In the following sections, for the Laplace-Belrami operator $\Delta_\Gamma$, we frequently use the following surface divergence formula
	$$
	\int_\Gamma \triangle_\Gamma y_\Gamma\, z_\Gamma\, \,d\sigma = -\int_\Gamma \nabla_\Gamma y_\Gamma\cdot\nabla_\Gamma z_\Gamma\, \,d\sigma\,,\qquad y_\Gamma\in H^2(\Gamma),\;\; z_\Gamma\in H^1(\Gamma).
	$$   
	We also introduce the following Hilbert spaces
	$$\mathbb{H}^k=\Big\{(y,y_\Gamma)\in H^k(G)\times H^k(\Gamma): \,\,\; y_\Gamma=y|_\Gamma\Big\},\quad k=1,2,$$
	viewed as a subspace of $H^k(G)\times H^k(\Gamma)$ with the natural topology inherited from $H^k(G)\times H^k(\Gamma)$.
	
	Let us first consider the following  forward stochastic parabolic equation
	\begin{equation}\label{eqqwf1}
		\begin{cases}
			\begin{array}{ll}
				dy - \Delta y \,dt = (a_1y+f_1) \,dt + (a_2y +f_2)\,dW(t)&\textnormal{in}\,\,Q,\\
				dy_\Gamma-\Delta_\Gamma y_\Gamma \,dt+\partial_\nu y \,dt = (b_1y_\Gamma+g_1) \,dt+(b_2y_\Gamma + g_2) \,dW(t) &\textnormal{on}\,\,\Sigma,\\
				y_\Gamma(t,x)=y\vert_\Gamma(t,x) &\textnormal{on}\,\,\Sigma,\\
				(y,y_\Gamma)\vert_{t=0}=(y_0,y_{0,\Gamma}) &\textnormal{in}\,\,G\times\Gamma,
			\end{array}
		\end{cases}
	\end{equation}
	where $(y_0,y_{0,\Gamma})\in L^2_{\mathcal{F}_0}(\Omega;\mathbb{L}^2)$ is the initial state, $(y,y_\Gamma)$ is the state variable, $a_1, a_2\in L_\mathcal{F}^\infty(0,T;L^\infty(G))$, $b_1, b_2\in L_\mathcal{F}^\infty(0,T;L^\infty(\Gamma))$, $f_1,f_2\in L^2_\mathcal{F}(0,T;L^2(G))$ and $g_1,g_2\in L^2_\mathcal{F}(0,T;L^2(\Gamma))$.
	
	From \cite[Theorem 2.1]{elgrou22SPEwithDBC}, we have the following well-posedness of \eqref{eqqwf1}.
	\begin{thm}\label{wellposedness1}
		For each  $(y_0,y_{0,\Gamma})\in L^2_{\mathcal{F}_0}(\Omega;\mathbb{L}^2)$, $f_1,f_2\in L^2_{\mathcal{F}}(0,T;L^2(G))$, and $g_1,g_2\in L^2_{\mathcal{F}}(0,T;L^2(\Gamma))$, the equation \eqref{eqqwf1} admits a unique weak solution
		$$(y,y_\Gamma)\in L^2_\mathcal{F}(\Omega;C([0,T];\mathbb{L}^2))\bigcap L^2_\mathcal{F}(0,T;\mathbb{H}^1).$$
		Moreover, there exists a constant $C>0$ depending only on $G$, $T$, $a_1, \,a_2, \,b_1$, and $b_2$ so that
		\begin{align*}
			\begin{aligned}
				&\,\vert(y,y_\Gamma)\vert_{L^2_\mathcal{F}(\Omega;C([0,T];\mathbb{L}^2))} + \vert(y,y_\Gamma)\vert_{L^2_\mathcal{F}(0,T;\mathbb{H}^1)}\\
				&\leq C\,\Big(|(y_0,y_{0,\Gamma})|_{L^2_{\mathcal{F}_0}(\Omega;\mathbb{L}^2)}+|f_1|_{L^2_{\mathcal{F}}(0,T;L^2(G))}+|f_2|_{L^2_{\mathcal{F}}(0,T;L^2(G))}\\
				&\hspace{0.9cm}\,+|g_1|_{L^2_{\mathcal{F}}(0,T;L^2(\Gamma))}+|g_2|_{L^2_{\mathcal{F}}(0,T;L^2(\Gamma))}\Big).
			\end{aligned}
		\end{align*}
	\end{thm}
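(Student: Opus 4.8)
The plan is to treat \eqref{eqqwf1} within the variational (Gelfand triple) framework for stochastic evolution equations, exploiting the self-adjoint dissipative structure of the bulk-surface Laplacian with dynamic boundary conditions. First I would introduce the unbounded operator $\mathcal{A}$ on $\mathbb{L}^2$ defined by $\mathcal{A}(y,y_\Gamma)=(\triangle y,\triangle_\Gamma y_\Gamma-\partial_\nu y)$ with domain $\mathbb{H}^2$, together with the associated bilinear form on $\mathbb{H}^1$,
$$\mathfrak{a}\big((y,y_\Gamma),(z,z_\Gamma)\big)=\int_G\nabla y\cdot\nabla z\,dx+\int_\Gamma\nabla_\Gamma y_\Gamma\cdot\nabla_\Gamma z_\Gamma\,d\sigma.$$
The decisive point is that, using Green's formula in $G$ together with the surface divergence formula on $\Gamma$ and the compatibility condition $y_\Gamma=y|_\Gamma$, the boundary integrals produced by $\partial_\nu y$ cancel exactly, so that $\langle \mathcal{A}(y,y_\Gamma),(y,y_\Gamma)\rangle_{\mathbb{L}^2}=-\mathfrak{a}\big((y,y_\Gamma),(y,y_\Gamma)\big)\le 0$. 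Hence $\mathfrak{a}$ is continuous and satisfies a G\aa rding inequality on the Gelfand triple $\mathbb{H}^1\hookrightarrow\mathbb{L}^2\hookrightarrow(\mathbb{H}^1)'$, and $\mathcal{A}$ is self-adjoint, dissipative, with compact resolvent.

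Next I would derive the a priori estimate by applying It\^o's formula to $t\mapsto|(y,y_\Gamma)(t)|^2_{\mathbb{L}^2}$. The drift contribution contains the term $2\langle\mathcal{A}(y,y_\Gamma),(y,y_\Gamma)\rangle_{\mathbb{L}^2}=-2\mathfrak{a}\big((y,y_\Gamma),(y,y_\Gamma)\big)$, which absorbs the $\mathbb{H}^1$-seminorm on the left-hand side; the potential terms $a_1,b_1$ are bounded, and the source terms $f_1,g_1$ are handled by Young's inequality. The It\^o correction $|(a_2y+f_2,\,b_2y_\Gamma+g_2)|^2_{\mathbb{L}^2}$ is estimated by $C\big(|(y,y_\Gamma)|^2_{\mathbb{L}^2}+|f_2|^2_{L^2(G)}+|g_2|^2_{L^2(\Gamma)}\big)$, while the stochastic integral is a martingale of zero expectation. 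Taking expectation, using the Burkholder--Davis--Gundy inequality to control the supremum in time, and applying Gronwall's lemma yields the stated bound in $L^2_\mathcal{F}(\Omega;C([0,T];\mathbb{L}^2))\cap L^2_\mathcal{F}(0,T;\mathbb{H}^1)$.

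For existence I would use a Galerkin scheme based on the orthonormal basis of $\mathbb{L}^2$ consisting of eigenfunctions of $\mathcal{A}$ (available since $\mathcal{A}$ has compact resolvent). Projecting \eqref{eqqwf1} onto the span of the first $n$ eigenfunctions gives a finite-dimensional linear SDE with a unique strong solution; the energy estimate above holds uniformly in $n$, so the approximate solutions are bounded in $L^2_\mathcal{F}(0,T;\mathbb{H}^1)$ and in $L^2_\mathcal{F}(\Omega;C([0,T];\mathbb{L}^2))$. Extracting weakly convergent subsequences and passing to the limit in the variational formulation (the linearity of the equation making the limit passage routine) produces a weak solution. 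Uniqueness follows by applying the same energy identity to the difference of two solutions, for which $y_0=y_{0,\Gamma}=0$ and $f_i=g_i=0$, forcing the difference to vanish.

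The step I expect to be the main obstacle is the rigorous justification of the integration-by-parts identity yielding the cancellation of the $\partial_\nu y$ boundary terms, that is, establishing that $\mathcal{A}$ is self-adjoint and dissipative on $\mathbb{L}^2$ with the correct domain incorporating the trace coupling $y_\Gamma=y|_\Gamma$. This requires care with the trace and normal-derivative terms at the regularity available for weak solutions, and it is precisely the feature distinguishing the dynamic boundary condition from the classical Dirichlet or Neumann settings; once this structural identity is in place, the remaining arguments follow the standard theory for stochastic parabolic equations.
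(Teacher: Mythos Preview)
The paper does not supply its own proof of this theorem: it is quoted directly from \cite[Theorem~2.1]{elgrou22SPEwithDBC} and stated without argument. There is therefore no in-paper proof to compare against.

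That said, your proposal is a correct and standard route to such a well-posedness result, and is in fact the natural strategy one would expect in the cited reference. The Gelfand triple $\mathbb{H}^1\hookrightarrow\mathbb{L}^2\hookrightarrow(\mathbb{H}^1)'$, the identification of the Wentzell-type operator $\mathcal{A}$ as self-adjoint and dissipative via the cancellation of the normal-derivative term, the energy estimate from It\^o's formula combined with BDG and Gronwall, and the Galerkin scheme on eigenfunctions of $\mathcal{A}$ together constitute the classical variational approach to stochastic parabolic problems (in the spirit of Krylov--Rozovskii and Pardoux). Your identification of the integration-by-parts cancellation as the distinguishing structural feature of the dynamic boundary setting is accurate; once that is established at the level of the bilinear form $\mathfrak{a}$ on $\mathbb{H}^1$ (which only requires $H^1$-regularity and the trace coupling, not strong normal derivatives), the rest is routine. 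One minor remark: for the $L^2_\mathcal{F}(\Omega;C([0,T];\mathbb{L}^2))$ bound you should apply BDG \emph{before} taking expectation of the supremum, not after, and the martingale term must be controlled rather than dropped; your sketch mentions this but the order of operations should be kept clear.
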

	On the other hand, we introduce the following backward stochastic parabolic equation
	\begin{equation}\label{eqqwb1}
		\begin{cases}
			\begin{array}{ll}
				dz + \Delta z \,dt = (a_3z+a_4Z+f_3) \,dt + Z\,dW(t)&\textnormal{in}\,\,Q,\\
				dz_\Gamma+\Delta_\Gamma z_\Gamma \,dt-\partial_\nu z \,dt = (b_3z_\Gamma+b_4\widehat{Z}+g_3) \,dt+\widehat{Z} \,dW(t) &\textnormal{on}\,\,\Sigma,\\
				z_\Gamma(t,x)=z\vert_\Gamma(t,x) &\textnormal{on}\,\,\Sigma,\\
				(z,z_\Gamma)\vert_{t=T}=(z_T,z_{T,\Gamma}) &\textnormal{in}\,\,G\times\Gamma,
			\end{array}
		\end{cases}
	\end{equation}
	where $(z_T,z_{T,\Gamma})\in L^2_{\mathcal{F}_T}(\Omega;\mathbb{L}^2)$ is the terminal state, $(z,z_\Gamma,Z,\widehat{Z})$ is the state variable, $a_3, a_4\in L_\mathcal{F}^\infty(0,T;L^\infty(G))$, $b_3, b_4\in L_\mathcal{F}^\infty(0,T;L^\infty(\Gamma))$, $f_3\in L^2_\mathcal{F}(0,T;L^2(G))$ and $g_3\in L^2_\mathcal{F}(0,T;L^2(\Gamma))$.
	
	From \cite[Theorem 2.2]{elgrou22SPEwithDBC}, we give the following well-posedness result of \eqref{eqqwb1}.
	\begin{thm}\label{wellposedness2}
		For each  $(z_T, z_{T,\Gamma}) \in L_{\mathcal{F}_T}^2\left(\Omega ; \mathbb{L}^2\right)$, $f_3\in L^2_\mathcal{F}(0,T;L^2(G))$ and $g_3\in L^2_\mathcal{F}(0,T;L^2(\Gamma))$, the system \eqref{eqqwb1} has a unique weak solution
		$$
		(z, z_{\Gamma}, Z, \widehat{Z}) \in\left(L_{\mathcal{F}}^2\left(\Omega ; C([0, T]; \mathbb{L}^2)\right) \bigcap L_{\mathcal{F}}^2\left(0, T ; \mathbb{H}^1\right)\right) \times L_{\mathcal{F}}^2\left(0, T ; \mathbb{L}^2\right).
		$$
		Furthermore, there exists a constant $C>0$ depending only on $G$, $T$, $a_3, \,a_4, \,b_3$, and $b_4$  such that
		\begin{align*}
			\begin{aligned}
				&\,\left|\left(z, z_{\Gamma}\right)\right|_{L_{\mathcal{F}}^2\left(\Omega ; C\left([0, T] ; \mathbb{L}^2\right)\right)}+\left|\left(z, z_{\Gamma}\right)\right|_{L_{\mathcal{F}}^2\left(0, T ; \mathbb{H}^1\right)}+|(Z, \widehat{Z})|_{L_{\mathcal{F}}^2\left(0, T ; \mathbb{L}^2\right)}\\
				&\leq C\Big(\left|\left(z_T, z_{T,\Gamma}\right)\right|_{L_{\mathcal{F}_T}^2\left(\Omega ; \mathbb{L}^2\right)}+|f_3|_{L^2_\mathcal{F}(0,T;L^2(G))} +|g_3|_{L^2_\mathcal{F}(0,T;L^2(\Gamma))}\Big).
			\end{aligned}
		\end{align*}
	\end{thm}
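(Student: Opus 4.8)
The equation \eqref{eqqwb1} is a backward stochastic evolution equation governed by the ``Wentzell Laplacian'' $\mathcal{A}(z,z_\Gamma)=(\Delta z,\ \Delta_\Gamma z_\Gamma-\partial_\nu z)$ acting on $\mathbb{L}^2$ with the coupling constraint $z_\Gamma=z|_\Gamma$. The plan is to treat it within the variational/duality framework for backward stochastic partial differential equations. The starting observation is that, by the surface divergence formula recalled above together with the classical Green identity in $G$, the operator $\mathcal{A}$ satisfies
\begin{align*}
	\langle \mathcal{A}(z,z_\Gamma),(w,w_\Gamma)\rangle_{\mathbb{L}^2}
	= -\int_G \nabla z\cdot\nabla w\,dx-\int_\Gamma \nabla_\Gamma z_\Gamma\cdot\nabla_\Gamma w_\Gamma\,d\sigma
\end{align*}
for $(z,z_\Gamma),(w,w_\Gamma)\in\mathbb{H}^2$, so that $-\mathcal{A}$ is a nonnegative self-adjoint operator on $\mathbb{L}^2$ with form domain $\mathbb{H}^1$; in particular it generates an analytic $C_0$-semigroup and admits a complete orthonormal basis of eigenfunctions in $\mathbb{L}^2$. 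This is exactly the coercivity structure that drives all the estimates below.

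First I would dispose of the potentials. Since $a_3,a_4,b_3,b_4$ are bounded and adapted, a standard contraction-mapping argument on a short time interval (iterated to cover $[0,T]$) reduces the problem to the case $a_3=a_4=b_3=b_4=0$; equivalently, once the a priori estimate is available one absorbs the lower-order terms by Gronwall's inequality. For the reduced equation I would establish existence by a Galerkin scheme: projecting onto the span of the first $n$ eigenfunctions of $-\mathcal{A}$ turns \eqref{eqqwb1} into a finite-dimensional linear backward stochastic differential equation with terminal data and inhomogeneity, which is uniquely solvable by the classical theory, yielding approximate solutions $(z^n,z_\Gamma^n,Z^n,\widehat{Z}^n)$. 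Alternatively, because \eqref{eqqwb1} is the formal adjoint of the forward system \eqref{eqqwf1}, one may define the solution by transposition: applying It\^o's formula to $\langle(y,y_\Gamma),(z,z_\Gamma)\rangle_{\mathbb{L}^2}$ for the forward solution $(y,y_\Gamma)$ supplied by Theorem \ref{wellposedness1} yields a duality identity that, via the Riesz representation theorem together with the martingale representation theorem on $L^2_\mathcal{F}(0,T;\mathbb{L}^2)$, characterizes $(z,z_\Gamma,Z,\widehat{Z})$ uniquely.

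The core of the argument is the a priori estimate. Applying It\^o's formula to $t\mapsto|(z,z_\Gamma)|_{\mathbb{L}^2}^2$, the coercivity identity above produces the coercive gradient terms $\int_G|\nabla z|^2\,dx+\int_\Gamma|\nabla_\Gamma z_\Gamma|^2\,d\sigma$ with the correct (good) sign, while the stochastic integral contributes $|(Z,\widehat{Z})|_{\mathbb{L}^2}^2$ to the energy; the cross terms involving $a_4 Z$ and $b_4\widehat{Z}$ are controlled by Young's inequality so that a fraction of $|(Z,\widehat{Z})|_{\mathbb{L}^2}^2$ is absorbed on the left. Taking expectations, using the Burkholder--Davis--Gundy inequality to handle the martingale part and Gronwall's inequality to close the loop, gives both the stated energy estimate and the regularity $(z,z_\Gamma)\in L^2_\mathcal{F}(\Omega;C([0,T];\mathbb{L}^2))\cap L^2_\mathcal{F}(0,T;\mathbb{H}^1)$ and $(Z,\widehat{Z})\in L^2_\mathcal{F}(0,T;\mathbb{L}^2)$. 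The same estimate applied to the difference of two solutions yields uniqueness, and it also provides the uniform bounds needed to pass to the limit in the Galerkin approximation. I expect the main obstacle to be precisely the coupled integration by parts at the boundary: one must verify that the normal-derivative term $\partial_\nu z$ appearing in the boundary equation combines with the bulk Laplacian so as to cancel the uncontrolled boundary flux and leave only the two nonnegative gradient contributions, and simultaneously keep track of the martingale integrands $Z$ and $\widehat{Z}$ on the bulk and the boundary without losing adaptedness. Granting this, the result coincides with \cite[Theorem 2.2]{elgrou22SPEwithDBC}.
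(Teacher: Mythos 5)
The paper does not actually prove this theorem: it is imported verbatim by citation from \cite[Theorem 2.2]{elgrou22SPEwithDBC}, so there is no in-text argument to compare yours against. That said, your outline is a correct sketch of the standard well-posedness proof for this class of backward stochastic Wentzell problems, and it is essentially the argument one expects the cited reference to carry out. The two load-bearing points are exactly the ones you identify and you get both right: (i) the Green identity in $G$ produces a boundary flux $\int_\Gamma \partial_\nu z\, w\,d\sigma$ that is cancelled by the $-\partial_\nu z$ term in the boundary equation precisely because of the trace coupling $w_\Gamma=w|_\Gamma$, leaving the nonnegative form $\int_G|\nabla z|^2dx+\int_\Gamma|\nabla_\Gamma z_\Gamma|^2d\sigma$ with form domain $\mathbb{H}^1$; and (ii) in the It\^o computation for $|(z,z_\Gamma)|^2_{\mathbb{L}^2}$ run backward from $T$, the quadratic variation term $|(Z,\widehat Z)|^2_{\mathbb{L}^2}$ lands on the \emph{good} side of the identity (unlike for forward equations), which is what lets you both estimate $(Z,\widehat Z)$ and absorb the cross terms $a_4Z$, $b_4\widehat Z$ by Young's inequality. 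One caveat on your reduction step: the contraction-mapping removal of the potentials must be performed in the product norm that includes the $(Z,\widehat Z)$ component, since $a_4Z$ and $b_4\widehat Z$ sit in the drift; this is standard BSDE practice but worth saying, and your alternative of keeping the potentials and closing with Gronwall after the a priori estimate is the cleaner route. The transposition/duality characterization via Theorem \ref{wellposedness1} and the martingale representation theorem is also a legitimate alternative construction. In short: the proposal is sound; it simply supplies an argument where the paper supplies only a reference.
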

	
	\section{Reduction of the insensitizing control problem}\label{sec3}
	This section is devoted to reducing the insensitivity control problem \eqref{inspb} to the following partial null controllability problem for a cascade system of coupled forward-backward stochastic parabolic equations.
	\begin{prop}\label{proposs11}
		For  $\xi_1\in L^2_\mathcal{F}(0,T;L^2(G))$, $\xi_2\in L^2_\mathcal{F}(0,T;L^2(\Gamma))$, and $(y_0,y_{0,\Gamma})\in L^2_{\mathcal{F}_0}(\Omega;\mathbb{L}^2)$, let $(y,y_\Gamma;z,z_\Gamma,Z,\widehat{Z})$ be the solution of the following linear cascade system of coupled stochastic parabolic equations associated with the controls $(u,v_1,v_2) \in L^2_\mathcal{F}(0,T;L^2(G_0))\times L^2_\mathcal{F}(0,T;L^2(G))\times L^2_\mathcal{F}(0,T;L^2(\Gamma))$
		\begin{equation}\label{forr4.1}
			\begin{cases}
				\begin{array}{ll}
					dy - \triangle y \,dt = (\xi_1+a_1 y+\chi_{G_0}u) \,dt + (a_2y+v_1) \,dW(t) &\textnormal{in}\,\,Q,\\
					dy_\Gamma-\triangle_\Gamma y_\Gamma \,dt+\partial_\nu y \,dt = (\xi_2+b_1y_\Gamma)\,dt+(b_2y_\Gamma+v_2) \,dW(t) &\textnormal{on}\,\,\Sigma,\\
					y_\Gamma(t,x)=y\vert_\Gamma(t,x) &\textnormal{on}\,\,\Sigma,\\
					(y,y_\Gamma)\vert_{t=0}=(y_0,y_{0,\Gamma}) &\textnormal{in}\,\,G\times\Gamma,
				\end{array}
			\end{cases}
		\end{equation}
		and 
		\begin{equation}\label{back45}
			\begin{cases}
				\begin{array}{ll}
					dz + \triangle z \,dt = (-a_1 z-a_2Z-\chi_{\mathcal{O}}y) \,dt + Z \,dW(t) &\textnormal{in}\,\,Q,\\
					dz_\Gamma+\triangle_\Gamma z_\Gamma \,dt-\partial_\nu z \,dt = (-b_1z_\Gamma-b_2\widehat{Z}-\chi_{\mathcal{O}_\Gamma}y_\Gamma)\,dt+\widehat{Z} \,dW(t) &\textnormal{on}\,\,\Sigma,\\
					z_\Gamma(t,x)=z\vert_\Gamma(t,x) &\textnormal{on}\,\,\Sigma,\\
					(z,z_\Gamma)\vert_{t=T}=(0,0) &\textnormal{in}\,\,G\times\Gamma.
				\end{array}
			\end{cases}
		\end{equation}
		Then  the insensitivity control problem \eqref{inspb} holds for the control triple $(u,v_1,v_2)$
		if and only if
		the solution $(y,y_\Gamma;z,z_\Gamma,Z,\widehat{Z})$ of \eqref{forr4.1}-\eqref{back45} satisfies that
		\begin{align}\label{null controprop}
			z(0,\cdot)=0\;\; \textnormal{in}\;\; G, \quad z_\Gamma(0,\cdot)=0 \;\; \textnormal{on}\;\;  \Gamma, \quad\mathbb{P}\textnormal{-a.s.}
		\end{align}
	\end{prop}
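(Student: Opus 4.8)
The plan is to realize the two insensitivity conditions in \eqref{inspb} as the vanishing of a duality pairing against the terminal-time datum $(z(0),z_\Gamma(0))$ of the backward equation \eqref{back45}, and then to observe that this pairing vanishes for every admissible $(\widehat y_0,\widehat y_{0,\Gamma})$ precisely when \eqref{null controprop} holds. First I would differentiate $\Phi$ in the parameters. Since \eqref{ass15} is linear and depends affinely on $(\tau_1,\tau_2)$ through the initial datum, the map $(\tau_1,\tau_2)\mapsto (y,y_\Gamma)$ is affine, so the partial derivatives exist. Writing $(p^1,p^1_\Gamma):=\partial_{\tau_1}(y,y_\Gamma)|_{\tau_1=\tau_2=0}$ and $(p^2,p^2_\Gamma):=\partial_{\tau_2}(y,y_\Gamma)|_{\tau_1=\tau_2=0}$, each $(p^i,p^i_\Gamma)$ solves the homogeneous forward system (that is, \eqref{eqqwf1} with $f_j=g_j=0$) with initial data $(\widehat y_0,0)$ for $i=1$ and $(0,\widehat y_{0,\Gamma})$ for $i=2$; these are well posed by Theorem \ref{wellposedness1}. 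At $\tau_1=\tau_2=0$ the state $(y,y_\Gamma)$ coincides with the solution of \eqref{forr4.1}, and differentiating \eqref{functioPhii1.2} under the expectation yields
\begin{align*}
\frac{\partial \Phi}{\partial\tau_1}\Big|_{0}=\mathbb{E}\int_0^T\!\Big(\int_{\mathcal{O}}y\,p^1\,dx+\int_{\mathcal{O}_\Gamma}y_\Gamma\,p^1_\Gamma\,d\sigma\Big)dt,
\end{align*}
and the analogous formula with $p^2$ for $\partial_{\tau_2}\Phi$.

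Next I would pair the backward state $(z,z_\Gamma,Z,\widehat Z)$ of \eqref{back45} with $(p^i,p^i_\Gamma)$. Applying It\^o's formula to $t\mapsto\langle (z,z_\Gamma),(p^i,p^i_\Gamma)\rangle_{\mathbb{L}^2}$ and integrating over $(0,T)$, the potential terms carrying $a_1,b_1$ and the terms carrying $a_2,b_2$ cancel in pairs: the drift contributions $-a_2 Z\,p^i$ and $-b_2\widehat Z\,p^i_\Gamma$ are annihilated by the cross-variations $Z\,a_2 p^i$ and $\widehat Z\,b_2 p^i_\Gamma$ coming from the $dW$ integrals, exactly because \eqref{back45} is built as the adjoint of \eqref{eqqwf1}. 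The second-order terms are handled by Green's identity $\int_G(z\triangle p^i-p^i\triangle z)\,dx=\int_\Gamma(z\,\partial_\nu p^i-p^i\,\partial_\nu z)\,d\sigma$ and by the surface divergence formula, which kills the $\triangle_\Gamma$ contributions; the remaining normal-derivative integrals then cancel against the $\partial_\nu$ terms in the two dynamic boundary conditions once the compatibilities $z_\Gamma=z|_\Gamma$ and $p^i_\Gamma=p^i|_\Gamma$ are used.

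Taking expectations removes the martingale part, and invoking the terminal condition $(z,z_\Gamma)|_{t=T}=(0,0)$ together with the initial data of $(p^i,p^i_\Gamma)$ produces the duality identities
\begin{align*}
\frac{\partial \Phi}{\partial\tau_1}\Big|_{0}=\mathbb{E}\int_G z(0)\,\widehat y_0\,dx,\qquad \frac{\partial \Phi}{\partial\tau_2}\Big|_{0}=\mathbb{E}\int_\Gamma z_\Gamma(0)\,\widehat y_{0,\Gamma}\,d\sigma.
\end{align*}
Since $(\widehat y_0,\widehat y_{0,\Gamma})$ ranges over the unit sphere of $\mathbb{L}^2=L^2(G)\times L^2(\Gamma)$ and its two components are unconstrained, both derivatives vanish for all such data if and only if $z(0,\cdot)=0$ in $G$ and $z_\Gamma(0,\cdot)=0$ on $\Gamma$, $\mathbb{P}$-a.s.; by linearity the unit-norm restriction is immaterial. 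This is precisely \eqref{null controprop}, which gives the desired equivalence.

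I expect the main obstacle to be the It\^o/integration-by-parts step: one must track the coupled bulk and surface dynamics simultaneously, check that every potential, diffusion, and boundary contribution cancels, and justify It\^o's formula for the pairing at the regularity of the weak solutions. I would secure the latter by first carrying out the computation for smooth data and then passing to the limit using the a priori estimates of Theorems \ref{wellposedness1}--\ref{wellposedness2}.
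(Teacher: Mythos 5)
Your proposal is correct and follows essentially the same route as the paper: differentiate $\Phi$ to identify the derivative states as solutions of the homogeneous forward system with initial data $(\widehat y_0,0)$ and $(0,\widehat y_{0,\Gamma})$, then use Itô's formula to pair them with the backward state \eqref{back45} and obtain the duality identities $\partial_{\tau_1}\Phi|_0=\mathbb{E}\int_G z(0)\widehat y_0\,dx$ and $\partial_{\tau_2}\Phi|_0=\mathbb{E}\int_\Gamma z_\Gamma(0)\widehat y_{0,\Gamma}\,d\sigma$, from which the equivalence with \eqref{null controprop} follows by the arbitrariness of $(\widehat y_0,\widehat y_{0,\Gamma})$. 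The only cosmetic difference is that the paper realizes the derivatives as limits of difference quotients rather than invoking affine dependence, and it does not spell out the cancellation details of the Itô step that you describe.
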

	\begin{proof}
		For any $(u,v_1,v_2) \in L^2_\mathcal{F}(0,T;L^2(G_0))\times L^2_\mathcal{F}(0,T;L^2(G))\times L^2_\mathcal{F}(0,T;L^2(\Gamma))$, $\tau_1,\tau_2\in\mathbb{R}$ and $(\widehat{y}_0,\widehat{y}_{0,\Gamma})\in L^2_{\mathcal{F}_0}(\Omega;\mathbb{L}^2)$ such that $|(\widehat{y}_0,\widehat{y}_{0,\Gamma})|_{L^2_{\mathcal{F}_0}(\Omega;\mathbb{L}^2)}=1$, we denote by  $(y_\tau,y_{\tau,\Gamma})$ is the solution of \eqref{ass15} associated to $\tau_1,\tau_2$ and the controls $u, v_1$ and $v_2$. It is not difficult to see that
		\begin{align}\label{partdert43}
			\begin{aligned}
				\displaystyle\frac{\partial \Phi(y_\tau,y_{\tau,\Gamma})}{\partial\tau_1}\Bigg|_{\tau_1=\tau_2=0}=\lim_{\tau_1\rightarrow0}\frac{1}{2}&\Bigg[\mathbb{E}\int_0^T\int_\mathcal{O} (y_{\tau_1}+y)\frac{y_{\tau_1}-y}{\tau_1} \,dx\,dt\\
				&\;\;+\mathbb{E}\int_0^T\int_{\mathcal{O}_\Gamma} (y_{\tau_1,\Gamma}+y_\Gamma)\frac{y_{\tau_1,\Gamma}-y_\Gamma}{\tau_1} \,d\sigma\,dt\Bigg],
			\end{aligned}
		\end{align}
		where $(y,y_\Gamma)$ is the solution of \eqref{forr4.1}, and $(y_{\tau_1},y_{\tau_1,\Gamma})$ is the solution of the following equation
		\begin{equation}\label{}
			\begin{cases}
				\begin{array}{ll}
					dy_{\tau_1} - \triangle y_{\tau_1} \,dt = (\xi_1+a_1 y_{\tau_1}+\chi_{G_0}u) \,dt + (a_2y_{\tau_1}+v_1) \,dW(t) &\textnormal{in}\,\,Q,\\
					dy_{\tau_1,\Gamma}-\triangle_\Gamma y_{\tau_1,\Gamma} \,dt+\partial_\nu y_{\tau_1} \,dt = (\xi_2+b_1y_{\tau_1,\Gamma})\,dt+(b_2y_{\tau_1,\Gamma}+v_2) \,dW(t) &\textnormal{on}\,\,\Sigma,\\
					y_{\tau_1,\Gamma}(t,x)=y_{\tau_1}\vert_\Gamma(t,x) &\textnormal{on}\,\,\Sigma,\\
					(y_{\tau_1},y_{\tau_1,\Gamma})\vert_{t=0}=(y_0+\tau_1\widehat{y}_0,y_{0,\Gamma})&\textnormal{in}\,\,G\times\Gamma.
				\end{array}
			\end{cases}
		\end{equation}
		Set $\overline{y}=\frac{y_{\tau_1}-y}{\tau_1}$ and $\overline{y}_\Gamma=\frac{y_{\tau_1,\Gamma}-y_\Gamma}{\tau_1}$, then it is easy to see that $(\overline{y},\overline{y}_\Gamma)$
		is the solution of 
		\begin{equation}\label{forr4.4}
			\begin{cases}
				\begin{array}{ll}
					d\overline{y} - \triangle \overline{y} \,dt = a_1 \overline{y} \,dt + a_2\overline{y} \,dW(t) &\textnormal{in}\,\,Q,\\
					d\overline{y}_\Gamma-\triangle_\Gamma \overline{y}_\Gamma \,dt+\partial_\nu \overline{y} \,dt = b_1\overline{y}_\Gamma\,dt+b_2\overline{y}_\Gamma \,dW(t) &\textnormal{on}\,\,\Sigma,\\
					\overline{y}_\Gamma(t,x)=\overline{y}\vert_\Gamma(t,x) &\textnormal{on}\,\,\Sigma,\\
					(\overline{y},\overline{y}_\Gamma)\vert_{t=0}=(\widehat{y}_0,0) &\textnormal{in}\,\,G\times\Gamma.
				\end{array}
			\end{cases}
		\end{equation}
		From \eqref{partdert43}, notice that
		\begin{align}\label{equdepar36}
			\frac{\partial \Phi(y_\tau,y_{\tau,\Gamma})}{\partial\tau_1}\Bigg|_{\tau_1=\tau_2=0}=\mathbb{E}\int_0^T\int_\mathcal{O} y\overline{y} \,dx\,dt+\mathbb{E}\int_0^T\int_{\mathcal{O}_\Gamma} y_\Gamma\overline{y}_\Gamma \,d\sigma\,dt.
		\end{align}
		Using Itô's formula for solutions of  \eqref{back45} and \eqref{forr4.4}, we obtain that
		\begin{align*}
			\mathbb{E}\int_0^T\int_\mathcal{O} y\overline{y} \,dx\,dt+\mathbb{E}\int_0^T\int_{\mathcal{O}_\Gamma} y_\Gamma\overline{y}_\Gamma \,d\sigma\,dt=\mathbb{E}\int_G \widehat{y}_0 z(0)\,dx,
		\end{align*}
		which, together with \eqref{equdepar36}, implies that
		\begin{align}\label{partdert432.8}
			\frac{\partial \Phi(y_\tau,y_{\tau,\Gamma})}{\partial\tau_1}\Bigg|_{\tau_1=\tau_2=0}=\mathbb{E}\int_G \widehat{y}_0 z(0)\,dx.
		\end{align}
		On the other hand, similarly to $\frac{\partial \Phi}{\partial\tau_1}$, we can also derive that
		\begin{align}\label{partdert43tau2}
			\frac{\partial \Phi(y_\tau,y_{\tau,\Gamma})}{\partial\tau_2}\Bigg|_{\tau_1=\tau_2=0}=\mathbb{E}\int_\Gamma \widehat{y}_{0,\Gamma}z_\Gamma(0) \,d\sigma.
		\end{align}
		Combining \eqref{partdert432.8} and \eqref{partdert43tau2}, the insensitivity problem \eqref{inspb} holds if and only if 
		$$\mathbb{E}\int_G \widehat{y}_0 z(0)\,dx=\mathbb{E}\int_\Gamma \widehat{y}_{0,\Gamma}z_\Gamma(0) \,d\sigma=0,$$
		for all $(\widehat{y}_0,\widehat{y}_{0,\Gamma})\in L^2_{\mathcal{F}_0}(\Omega;\mathbb{L}^2)$ such that $|(\widehat{y}_0,\widehat{y}_{0,\Gamma})|_{L^2_{\mathcal{F}_0}(\Omega;\mathbb{L}^2)}=1$.
		Thus, we conclude that the insensitivity problem \eqref{inspb} is satisfied if and only if the controllability property \eqref{null controprop} holds. This concludes the proof of Proposition \ref{proposs11}.
	\end{proof}
	By the classical duality argument, the null controllability \eqref{null controprop} of equations \eqref{forr4.1}-\eqref{back45} can be reduced to an observability inequality for the following coupled stochastic parabolic equations
	\begin{equation}\label{adback4.77}
		\begin{cases}
			\begin{array}{ll}
				dp + \triangle p \,dt = (-a_1 p-a_2P-\chi_{\mathcal{O}}q) \,dt + P \,dW(t) &\textnormal{in}\,\,Q,\\
				dp_\Gamma+\triangle_\Gamma p_\Gamma \,dt-\partial_\nu p \,dt = (-b_1p_\Gamma-b_2\widehat{P}-\chi_{\mathcal{O}_\Gamma}q_\Gamma)\,dt+\widehat{P} \,dW(t) &\textnormal{on}\,\,\Sigma,\\
				p_\Gamma(t,x)=p\vert_\Gamma(t,x) &\textnormal{on}\,\,\Sigma,\\
				(p,p_\Gamma)\vert_{t=T}=(0,0) &\textnormal{in}\,\,G\times\Gamma,
			\end{array}
		\end{cases}
	\end{equation}
	and
	\begin{equation}\label{adjoforr4.8}
		\begin{cases}
			\begin{array}{ll}
				dq - \triangle q \,dt = a_1 q \,dt + a_2q \,dW(t) &\textnormal{in}\,\,Q,\\
				dq_\Gamma-\triangle_\Gamma q_\Gamma \,dt+\partial_\nu q \,dt = b_1q_\Gamma\,dt+b_2q_\Gamma\,dW(t) &\textnormal{on}\,\,\Sigma,\\
				q_\Gamma(t,x)=q\vert_\Gamma(t,x) &\textnormal{on}\,\,\Sigma,\\
				(q,q_\Gamma)\vert_{t=0}=(q_0,q_{0,\Gamma}) &\textnormal{in}\,\,G\times\Gamma.
			\end{array}
		\end{cases}
	\end{equation}
	
	\section{Carleman estimates and the needed observability inequality}\label{sec4}
	This section is devoted to establishing a new Carleman estimate for the coupled system of forward-backward stochastic parabolic equations \eqref{adback4.77}-\eqref{adjoforr4.8}. This Carleman estimate will be the key tool in deriving our desired observability inequality.
	
	\subsection{Carleman estimate for the coupled system \eqref{adback4.77}-\eqref{adjoforr4.8}}
	
	We first introduce the following known lemma. For the proof, we refer to \cite{fursikov1996controllability}.
	\begin{lm}\label{lmm5.1}
		For any nonempty open subset $G_1\Subset G$ $($i.e., $\overline{G_1}\subset G$$)$, there exists a function $\psi\in C^4(\overline{G})$ such that
		$$
		\psi>0\;\,\, \textnormal{in} \,\,G\,;\qquad \psi=0\;\,\,\, \textnormal{on} \,\,\Gamma;\qquad\vert\nabla\psi\vert>0\; \,\,\,\,\textnormal{in}\,\,\overline{G\setminus G_1}.
		$$
	\end{lm}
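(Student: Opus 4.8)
The statement to prove is the standard Fursikov--Imanuvilov weight lemma: given a nonempty open $G_1 \Subset G$, there exists $\psi \in C^4(\overline{G})$ that is positive inside $G$, vanishes on $\Gamma$, and has nonvanishing gradient on $\overline{G \setminus G_1}$. Let me sketch how I'd prove it.

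The key difficulty is constructing a function whose gradient vanishes only on a prescribed "good" set $G_1$—i.e., a function with no critical points outside a small region—which is a Morse-theoretic obstruction.

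\textbf{Proof proposal.}
The plan is to construct $\psi$ via a Morse-theoretic argument, reducing the nonvanishing-gradient requirement to controlling the critical points of a smooth function. First I would fix an auxiliary smooth function and then perturb it so that all its critical points are pushed into the prescribed subset $G_1$, where they are harmless since there the gradient is allowed to vanish.

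To begin, choose a smooth nonnegative function $\phi_0 \in C^4(\overline{G})$ that is strictly positive in $G$ and vanishes on $\Gamma$; the simplest choice is (a smoothing of) the distance function $d(x,\Gamma)$ near the boundary, extended into the interior, or equivalently the solution of an elliptic problem $-\triangle \phi_0 = 1$ in $G$, $\phi_0 = 0$ on $\Gamma$, which automatically gives $\phi_0 > 0$ in $G$ and (by Hopf's lemma) $\partial_\nu \phi_0 < 0$ on $\Gamma$, so $|\nabla \phi_0| > 0$ in a neighborhood of $\Gamma$. This handles the boundary conditions $\psi > 0$ in $G$ and $\psi = 0$ on $\Gamma$ directly, and already guarantees $|\nabla \psi| > 0$ near $\Gamma$.

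The remaining and main issue is to eliminate the critical points of $\phi_0$ lying in the compact interior region $\overline{G \setminus G_1}$. Here I would invoke the standard argument: fix finitely many points $x_1, \dots, x_m \in G_1$ (a covering argument lets us assume the critical set is controllable by finitely many "sinks"), and add to $\phi_0$ a correction built from small bumps or from the gradient field of an auxiliary Morse function, using Sard's theorem together with a transversality/genericity argument to ensure that after an arbitrarily small $C^4$-perturbation the gradient $\nabla \psi$ vanishes nowhere on the compact set $\overline{G \setminus G_1}$. Concretely, one can take $\psi = \phi_0 + \varepsilon \eta$ where $\eta$ is chosen so that along any would-be critical point the perturbation produces a nonzero gradient contribution; since $\overline{G\setminus G_1}$ is compact and the set of functions without critical points there is open and dense in an appropriate topology, a generic small perturbation succeeds while preserving positivity in $G$ and the boundary vanishing (by taking $\varepsilon$ small and $\eta$ compactly supported away from $\Gamma$, or by arranging $\eta = 0$ on $\Gamma$). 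I expect the perturbation step—verifying that critical points can be removed from $\overline{G\setminus G_1}$ while keeping $\psi$ in $C^4(\overline{G})$ and positive—to be the technical heart of the argument, and this is precisely the content one cites from \cite{fursikov1996controllability}.
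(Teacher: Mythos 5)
The paper does not actually prove this lemma; it is quoted verbatim from Fursikov--Imanuvilov \cite{fursikov1996controllability}, so the only fair comparison is between your sketch and the standard construction in that reference. Your treatment of the boundary requirements is fine and matches the classical argument: solving $-\triangle\phi_0=1$ in $G$, $\phi_0=0$ on $\Gamma$ gives positivity in $G$ by the maximum principle and $\partial_\nu\phi_0<0$ on $\Gamma$ by Hopf's lemma, hence $|\nabla\phi_0|>0$ in a collar of $\Gamma$.

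The gap is in the step you yourself identify as the technical heart. You claim that the set of functions with no critical points on $\overline{G\setminus G_1}$ is \emph{open and dense}, so that a generic arbitrarily small $C^4$-perturbation $\psi=\phi_0+\varepsilon\eta$ works. Openness holds, but density fails: any $\psi$ with $\psi>0$ in $G$ and $\psi=0$ on $\Gamma$ attains an interior maximum, which is a critical point, so the critical set is never empty; an arbitrarily small perturbation only displaces the critical points of $\phi_0$ slightly and cannot transport them into a prescribed, possibly distant, subset $G_1$. The correct mechanism, and the actual content of the Fursikov--Imanuvilov proof, is different in kind: first perturb $\phi_0$ to a \emph{Morse} function $g$ (Morse functions are dense, so this part is a genuine small perturbation), which has only finitely many, nondegenerate critical points, all interior; then compose with a diffeomorphism $\Phi$ of $\overline{G}$, equal to the identity near $\Gamma$, built as the time-one flow of a compactly supported vector field along paths joining each critical point to $G_1$, so that the finitely many critical points of $\psi=g\circ\Phi$ all land in $G_1$. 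This diffeomorphism is not a small perturbation of the function in any norm, which is precisely why the genericity argument you propose cannot replace it. With that substitution your outline becomes the standard proof; as written, the key step would fail.
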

	Since $\vert\nabla\psi\vert^2=\vert\nabla_\Gamma\psi\vert^2+\vert\partial_\nu\psi\vert^2$ on $\Gamma$,  the function $\psi$ in Lemma \ref{lmm5.1} satisfies that
	\begin{equation*}
		\nabla_\Gamma\psi=0\,,\qquad \vert\nabla\psi\vert=\vert\partial_\nu\psi\vert\,,\qquad\partial_\nu\psi\leq -C<0\,\,\,\;\;\textnormal{on}\,\,\;\Gamma, \,\,\,\textnormal{for some constant}\,\,C>0.
	\end{equation*}
	For large parameters $\lambda, \mu\geq1$, we choose the following weight functions
	\begin{align*}
		&\,\alpha\equiv \alpha(t,x) = \frac{e^{\mu\psi(x)}-e^{2\mu\vert\psi\vert_\infty}}{t(T-t)},\qquad \gamma\equiv \gamma(t) = \frac{1}{t(T-t)},\qquad\theta\equiv\theta(t,x)=e^{\lambda\alpha}.
	\end{align*}
	
	On the other hand, it is easy to check that there exists a constant $C=C(G,T)>0$ such that for all $(t,x)\in Q$
	\begin{align*}\begin{aligned}
			&\,\gamma(t)\geq C,\qquad\vert\gamma_t(t)\vert\leq C\gamma^2(t),\qquad\vert\gamma_{tt}(t)\vert\leq C\gamma^3(t),\\
			&\vert\alpha_t(t,x)\vert\leq Ce^{2\mu\vert\psi\vert_\infty}\gamma^2(t),\qquad\vert\alpha_{tt}(t,x)\vert\leq Ce^{2\mu\vert\psi\vert_\infty}\gamma^3(t).
	\end{aligned}\end{align*}
	
	We first consider the following forward stochastic parabolic equation
	\begin{equation}\label{forq5.1}
		\begin{cases}
			\begin{array}{ll}
				dq - \triangle q \,dt = f \,dt + g \,dW(t) &\textnormal{in}\,\,Q,\\
				dq_\Gamma-\triangle_\Gamma q_\Gamma \,dt+\partial_\nu q \,dt = f_\Gamma\,dt+g_\Gamma \,dW(t) &\textnormal{on}\,\,\Sigma,\\
				q_\Gamma(t,x)=q\vert_\Gamma(t,x) &\textnormal{on}\,\,\Sigma,\\
				(q,q_\Gamma)\vert_{t=0}=(q_0,q_{0,\Gamma}) &\textnormal{in}\,\,G\times\Gamma,
			\end{array}
		\end{cases}
	\end{equation}
	where $(q_0,q_{0,\Gamma})\in L^2_{\mathcal{F}_0}(\Omega;\mathbb{L}^2)$ is the initia condition, $f, g\in L^2_\mathcal{F}(0,T;L^2(G))$, $f_\Gamma, g_\Gamma\in L^2_\mathcal{F}(0,T;L^2(\Gamma))$. We have the following known Carleman estimate for \eqref{forq5.1} (see \cite[Theorem 3.1]{elgrou22SPEwithDBC}).
	\begin{lm}\label{lmm4.2}
		There exists a large $\mu_1\geq1$ such that for $\mu=\mu_1$, one can find a constant $C>0$ and a large $\lambda_1\geq1$ depending only on $G$, $G_1$, $\mu_1$ and $T$ such that for all $\lambda\geq\lambda_1$, $f, g\in L^2_\mathcal{F}(0,T;L^2(G))$, $f_\Gamma, g_\Gamma\in L^2_\mathcal{F}(0,T;L^2(\Gamma))$, and $(q_0,q_{0,\Gamma})\in L^2_{\mathcal{F}_0}(\Omega;\mathbb{L}^2)$, the associated solution of \eqref{forq5.1} satisfies  that
		\begin{align}\label{carfro5.2}
			\begin{aligned}
				&\;\lambda^3\mathbb{E}\int_Q \theta^2\gamma^3q^2\,dx\,dt +\lambda^3\mathbb{E}\int_\Sigma \theta^2\gamma^3q_\Gamma^2\,d\sigma\,dt\\
				&+\lambda\mathbb{E}\int_Q \theta^2\gamma \vert\nabla q\vert^2\,dx\,dt+ \lambda\mathbb{E}\int_\Sigma \theta^2\gamma \vert\nabla_\Gamma q_\Gamma\vert^2 \,d\sigma\,dt
				\\&\leq C \Bigg[ \lambda^3\mathbb{E}\int_0^T\int_{G_1} \theta^2\gamma^3 q^2 \,dx\,dt+ \mathbb{E}\int_Q \theta^2f^2 \,dx\,dt+\mathbb{E}\int_\Sigma \theta^2f_\Gamma^2 \,d\sigma\,dt\\
				&\hspace{0.8cm}+\lambda^2\mathbb{E}\int_Q \theta^2\gamma^2g^2 \,dx\,dt+\lambda^2\mathbb{E}\int_\Sigma \theta^2\gamma^2g_\Gamma^2 \,d\sigma\,dt\Bigg]. 
		\end{aligned}\end{align}
	\end{lm}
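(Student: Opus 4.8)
The plan is to derive \eqref{carfro5.2} from a pointwise weighted identity for the stochastic operator $dq-\triangle q\,dt$, carried out simultaneously in the bulk $Q$ and on the surface $\Sigma$, so that the coupling term $\partial_\nu q$ is treated as part of a single bulk--surface system rather than as an external boundary datum.

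First I would pass to the weighted unknowns $z=\theta q$ in $Q$ and $z_\Gamma=\theta q_\Gamma$ on $\Sigma$, with $\theta=e^{\lambda\alpha}$ and $\ell:=\lambda\alpha$; note that these are consistent, since $\psi=0$ on $\Gamma$ forces $\theta$ to be constant along $\Gamma$ at each time, so $z_\Gamma=z\vert_\Gamma$. Since $\theta$ is smooth in $t$ and carries no $dW$ part, Itô's formula gives $\theta\,dq=dz-\ell_t z\,dt$, and the elementary identity $\theta\triangle q=\triangle z-2\nabla\ell\cdot\nabla z+(|\nabla\ell|^2-\triangle\ell)z$ lets me rewrite the drift of $dz$ as $\triangle z-2\nabla\ell\cdot\nabla z+(\ell_t+|\nabla\ell|^2-\triangle\ell)z+\theta f$, with martingale part $\theta g\,dW$. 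I then split this drift into a formally self-adjoint part $\mathcal{A}z:=\triangle z+|\nabla\ell|^2 z$ and a first-order skew part containing $-2\nabla\ell\cdot\nabla z$; here $|\nabla\ell|^2=\lambda^2\mu^2 e^{2\mu\psi}\gamma^2|\nabla\psi|^2$ is the source of the top-order weight $\lambda^2\gamma^2$, while $\ell_t=\lambda\alpha_t$ and $\triangle\ell$ are of strictly lower order by the stated bounds on $\alpha_t$ and $\gamma_t$.

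The heart of the argument is to apply Itô's formula to suitable quadratic quantities built from $z$ and $z_\Gamma$ (equivalently, to multiply the rewritten equation by the first-order multiplier $2\nabla\ell\cdot\nabla z$, integrate by parts in $x$, and take expectation). The expected cross term between the self-adjoint and skew parts, after integration by parts, yields the dominant positive contributions $\lambda^3\mathbb{E}\int_Q\theta^2\gamma^3 q^2$ and $\lambda\mathbb{E}\int_Q\theta^2\gamma|\nabla q|^2$, the powers of $\gamma$ tracking the $t$-dependence of $\alpha$ and $\gamma$. The Itô correction term $d\langle z\rangle=\theta^2 g^2\,dt$ is exactly what produces the term $\lambda^2\mathbb{E}\int_Q\theta^2\gamma^2 g^2$ on the right-hand side, and likewise $\lambda^2\mathbb{E}\int_\Sigma\theta^2\gamma^2 g_\Gamma^2$ from the surface equation. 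Because $\theta$ together with all relevant powers of $\gamma$ vanish as $t\to0^+$ and $t\to T^-$, every time-boundary term arising from $\mathbb{E}\int_Q\partial_t(\cdots)$ vanishes and each martingale term has zero expectation, so neither initial nor terminal data enters the estimate.

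The main obstacle is the boundary analysis: integrating by parts in the bulk produces surface integrals over $\Sigma$ involving $\partial_\nu z$, $\partial_\nu\psi$, and $z_\Gamma$, which must be matched against the analogous identity obtained by running the same computation on the surface equation with $\triangle_\Gamma$ in place of $\triangle$. Here Lemma \ref{lmm5.1} is decisive: since $\psi=0$ and $\nabla_\Gamma\psi=0$ on $\Gamma$, the tangential weight derivatives drop out, while $\partial_\nu\psi\le -C<0$ gives the surface integrals a favourable sign, so that the dangerous normal-derivative contributions $\theta^2\gamma|\partial_\nu q|^2$ can be absorbed rather than estimated. Once the bulk and surface boundary terms are reconciled, I would first fix $\mu=\mu_1$ large so that $\mu^2|\nabla\psi|^2$ dominates the lower-order terms on $\overline{G\setminus G_1}$ (using $|\nabla\psi|>0$ there), and then take $\lambda\ge\lambda_1$ large to absorb all remaining lower-order terms into the left-hand side. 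The only region in which $\nabla\psi$ may vanish is $G_1$, whence the local observation term $\lambda^3\mathbb{E}\int_0^T\int_{G_1}\theta^2\gamma^3 q^2$ is the one that survives on the right, yielding \eqref{carfro5.2}.
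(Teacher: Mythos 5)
The paper does not prove this lemma at all: it is imported verbatim as a known result, citing \cite[Theorem 3.1]{elgrou22SPEwithDBC}, so there is no in-paper argument to compare against. Your sketch reconstructs the standard proof that the cited reference carries out, and the outline is sound: the change of unknown $z=\theta q$, $z_\Gamma=\theta q_\Gamma$ (consistent because $\psi=0$ on $\Gamma$ makes $\theta$ spatially constant there), the splitting of the drift into a self-adjoint part containing $|\nabla\ell|^2 z$ and a skew first-order part, the appearance of the $g$-terms with weight $\lambda^2\gamma^2$ through the quadratic variation $d\langle z\rangle=\theta^2g^2\,dt$, the cancellation of time-boundary terms since $\theta\gamma^k\to0$ at $t=0,T$, the reconciliation of the bulk boundary integrals with the surface identity using $\nabla_\Gamma\psi=0$ and $\partial_\nu\psi\le-C<0$, and the order of quantifiers (fix $\mu$ large first, then $\lambda$) all match the established route. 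The only caveat is that this is a plan rather than a proof: the pointwise weighted identity and the explicit bookkeeping of the cross terms and boundary integrals, which is where all the work lies, is asserted rather than performed, so as written it establishes plausibility of \eqref{carfro5.2} but would need the full computation (as in the cited theorem) to be complete.
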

	
	On the other hand, we also consider the following backward stochastic parabolic equation
	\begin{equation}\label{eqqgbc}
		\begin{cases}
			dp+\Delta p\,dt=F \,dt+ Z \,dW(t) & \text { in }Q, \\ 
			d p_{\Gamma}+\Delta_\Gamma p_\Gamma \,dt-\partial_\nu p \,dt=F_{\Gamma} \,dt+ \widehat{Z}\,dW(t) & \text { on }\Sigma, \\ 
			p_{\Gamma}(t, x)=p|_\Gamma(t, x) & \text { on }\Sigma, \\
			(p, p_\Gamma)|_{t=T}=(p_T, p_{T,\Gamma}) & \text { in } G\times\Gamma,
		\end{cases}
	\end{equation}
	where $(p_T,p_{T,\Gamma})\in L^2_{\mathcal{F}_T}(\Omega;\mathbb{L}^2)$ is the terminal state, $F\in L^2_\mathcal{F}(0,T;L^2(G))$ and $F_\Gamma\in L^2_\mathcal{F}(0,T;L^2(\Gamma))$. From \cite[Theorem 4.1]{elgrou22SPEwithDBC}, we also recall the following Carleman estimate for equation \eqref{eqqgbc}.
	\begin{lm}\label{lmm4.33}
		There exist a large $\mu_2\geq1$ such that for $\mu=\mu_2$, one can find a constant $C>0$ and a large $\lambda_2\geq1$ depending only on $G$, $G_1$, $\mu_2$ and $T$ such that for all $\lambda\geq\lambda_2$, $F\in L^2_\mathcal{F}(0,T;L^2(G))$, $F_\Gamma\in L^2_\mathcal{F}(0,T;L^2(\Gamma))$, and $(p_T,p_{T,\Gamma})\in L^2_{\mathcal{F}_T}(\Omega;\mathbb{L}^2)$, the corresponding solution $(p,p_\Gamma, P, \widehat{P})$ of \eqref{eqqgbc} satisfies that
		\begin{align}\label{carback5.8}
			\begin{aligned}
				&\;\lambda^3\mathbb{E}\int_Q \theta^2\gamma^3p^2\,dx\,dt +\lambda^3\mathbb{E}\int_\Sigma \theta^2\gamma^3p_\Gamma^2\,d\sigma\,dt\\
				&+\lambda\mathbb{E}\int_Q \theta^2\gamma \vert\nabla p\vert^2\,dx\,dt+ \lambda\mathbb{E}\int_\Sigma \theta^2\gamma \vert\nabla_\Gamma p_\Gamma\vert^2 \,d\sigma\,dt\\
				&\leq C \Bigg[ \lambda^3\mathbb{E}\int_0^T\int_{G_1} \theta^2\gamma^3 p^2 \,dx\,dt+ \mathbb{E}\int_Q \theta^2 F^2 \,dx\,dt+\mathbb{E}\int_\Sigma \theta^2 F_\Gamma^2\, \,d\sigma\,dt\\
				&\qquad\;\;+\lambda^2\mathbb{E}\int_Q \theta^2\gamma^2 P^2 \,dx\,dt+\lambda^2\mathbb{E}\int_\Sigma \theta^2\gamma \widehat{P}^2 \,\,d\sigma\,dt\Bigg]. 
			\end{aligned}
		\end{align}
	\end{lm}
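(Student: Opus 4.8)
The plan is to establish \eqref{carback5.8} by the weighted-identity (Fursikov--Imanuvilov) method adapted to backward stochastic equations, running in parallel with the forward estimate of Lemma \ref{lmm4.2}. First I would pass to the weighted state $\theta p$ (and $\theta p_\Gamma$ on the boundary) and reduce matters to a pointwise estimate for the backward stochastic operator $dp+\Delta p\,dt$. The decisive ingredient is a pointwise weighted identity: applying It\^o's formula to the product of $\theta p$ with a suitable first-order operator $\mathcal{A}(\theta p)$ and expanding, one obtains an identity of the form
\[
\theta\,(\text{principal part})\cdot\mathcal{A}(\theta p)=\operatorname{div}(\cdots)+\partial_t(\cdots)+(\text{martingale increment})+c\lambda^3\gamma^3(\theta p)^2+c\lambda\gamma|\nabla(\theta p)|^2+\cdots,
\]
where the quadratic-variation contribution of the It\^o term produces exactly the correction carried by the diffusion part, explaining the coefficient $\lambda^2\gamma^2$ on $\mathbb{E}\int_Q\theta^2\gamma^2P^2$. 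Because $\alpha\to-\infty$ as $t\to 0^+$ and $t\to T^-$, the weight $\theta$ and all its time-derivatives vanish at the temporal endpoints, so the terminal data $(p_T,p_{T,\Gamma})$ and the time-boundary terms drop out upon integrating $\partial_t(\cdots)$ in $t$ and taking expectation, the martingale increment contributing nothing in mean.

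Then I would integrate the pointwise identity over $G$ and over $\Gamma$ and add the two contributions. Integrating $\operatorname{div}(\cdots)$ over $G$ generates boundary integrals on $\Sigma$ involving the normal-derivative trace $\partial_\nu(\theta p)$ and the trace of $\theta p$; these are precisely the terms that must be reconciled with the intrinsic boundary Carleman identity coming from the $\Sigma$-equation, which carries $\Delta_\Gamma p_\Gamma$ and the coupling term $-\partial_\nu p$. Here the geometric properties of $\psi$ from Lemma \ref{lmm5.1} are essential: since $\nabla_\Gamma\psi=0$ and $\partial_\nu\psi\leq -C<0$ on $\Gamma$, the tangential weight-gradient contributions vanish on $\Sigma$ and the normal-derivative boundary terms appear with a favorable sign, so that the dangerous $\Sigma$-traces either cancel against their boundary-equation counterparts or can be absorbed into the positive left-hand side.

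Finally I would absorb the remaining lower-order terms. The zero- and first-order terms produced by the time derivatives of the weight, controlled through $|\alpha_t|\leq Ce^{2\mu|\psi|_\infty}\gamma^2$ and $|\alpha_{tt}|\leq Ce^{2\mu|\psi|_\infty}\gamma^3$ together with $|\gamma_t|\leq C\gamma^2$ and $|\gamma_{tt}|\leq C\gamma^3$, as well as the cross terms in the pointwise identity, are dominated for $\mu$ fixed large and then $\lambda$ large enough by the principal terms $\lambda^3\gamma^3(\theta p)^2$ and $\lambda\gamma|\nabla(\theta p)|^2$; the source terms $F$ and $F_\Gamma$ pass directly to the right-hand side, producing $\mathbb{E}\int_Q\theta^2F^2$ and $\mathbb{E}\int_\Sigma\theta^2F_\Gamma^2$. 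The only region where the principal zero-order coefficient need not be favorable is where $|\nabla\psi|$ may vanish, namely inside $G_1$; there the contribution is retained on the right, producing the single interior term $\lambda^3\mathbb{E}\int_0^T\int_{G_1}\theta^2\gamma^3p^2$. Undoing the substitution $\theta p\mapsto p$ then yields \eqref{carback5.8}.

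I expect the main obstacle to be the treatment of the coupled boundary terms on $\Sigma$: unlike the Dirichlet case, the dynamic boundary condition makes $\Gamma$ carry its own parabolic dynamics, and the normal-derivative trace $\partial_\nu p$ links the bulk and surface estimates. Matching these two families of boundary traces with the correct signs, so that no uncontrolled trace of $\nabla p$ or of $p$ survives on $\Sigma$, is the delicate point, and it is exactly what forces the use of the special weight $\psi$ vanishing on $\Gamma$ with $\partial_\nu\psi<0$.
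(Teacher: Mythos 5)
The paper does not actually prove this lemma: it is imported verbatim from Theorem 4.1 of \cite{elgrou22SPEwithDBC}, so there is no internal proof to compare your argument against. Your sketch does describe the method by which such estimates are established in that reference --- the Fursikov--Imanuvilov weighted identity adapted to the backward stochastic operator, the quadratic-variation term of It\^o's formula accounting for the $\lambda^2\gamma^2 P^2$ correction, the vanishing of $\theta$ at $t=0,T$ eliminating the terminal data, and the properties $\nabla_\Gamma\psi=0$ and $\partial_\nu\psi\le -C<0$ on $\Gamma$ governing the boundary bookkeeping --- so it is aimed in the right direction.

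As a proof, however, it has a genuine gap: the pointwise weighted identity, which carries the entire technical content of the lemma, is never written down, and the step you yourself single out as ``the delicate point'' --- matching the bulk boundary traces generated by integrating $\operatorname{div}(\cdots)$ over $G$ against the surface Carleman identity carried by the dynamic boundary equation, so that no uncontrolled trace of $\partial_\nu p$ or $\nabla_\Gamma p_\Gamma$ survives on $\Sigma$ --- is precisely the step you do not carry out. Two further points would need attention in a complete argument. First, the statement is asymmetric: the bulk diffusion term enters with weight $\lambda^2\gamma^2$ while the surface term $\widehat P^2$ enters with weight $\lambda^2\gamma$, and your symmetric description of the quadratic-variation contributions does not account for this. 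Second, for a backward equation the pair $(P,\widehat P)$ is part of the unknown solution (produced by the martingale representation theorem), so one must verify that the identity genuinely relegates these terms to the right-hand side with the stated weights rather than requiring control over them; this is where the backward case differs structurally from the forward estimate of Lemma \ref{lmm4.2}. If you intend to supply a self-contained proof rather than defer to \cite{elgrou22SPEwithDBC}, the pointwise identity and the boundary cancellation must be made explicit.
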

	
	In the sequel, we fix $\mu = \mu^* = \max(\mu_1, \mu_2)$, where $\mu_1$ (resp., $\mu_2$) is the constant given in Lemma~\ref{lmm4.2} (resp., Lemma~\ref{lmm4.33}), and we choose the set $G_1$ given in Lemma~\ref{lmm5.1} such that $G_1 \Subset G_0 \cap \mathcal{O}$. We have the following new Carleman estimate for the coupled system \eqref{adback4.77}-\eqref{adjoforr4.8}.
	\begin{thm}\label{carlfor insencontro}
		There exists a constant $C>0$ and a large $\lambda^*\geq1$ depending only on $G$, $G_0$, $G_1$, $\mu^*$ and $T$ such that for all $\lambda\geq\lambda^*$, and $(q_0,q_{0,\Gamma})\in L^2_{\mathcal{F}_0}(\Omega;\mathbb{L}^2)$, the associated solution $(p,p_\Gamma,P,\widehat{P};q,q_\Gamma)$ of the system \eqref{adback4.77}-\eqref{adjoforr4.8} satisfies that
		\begin{align}\label{carlestcasca}
			\begin{aligned}
				&\,\lambda^2\mathbb{E}\int_Q \theta^2\gamma^3p^2\,dx\,dt + \lambda^2\mathbb{E}\int_\Sigma \theta^2\gamma^3p_\Gamma^2\,d\sigma\,dt+\lambda^2\mathbb{E}\int_Q \theta^2\gamma^3q^2\,dx\,dt \\
				&+\lambda^2\mathbb{E}\int_\Sigma \theta^2\gamma^3q_\Gamma^2\,d\sigma\,dt+\mathbb{E}\int_Q \theta^2\gamma \vert\nabla p\vert^2\,dx\,dt + \mathbb{E}\int_\Sigma \theta^2\gamma \vert\nabla_\Gamma p_\Gamma\vert^2 \,d\sigma\,dt\\
				&+\mathbb{E}\int_Q \theta^2\gamma \vert\nabla q\vert^2\,dx\,dt+ \mathbb{E}\int_\Sigma \theta^2\gamma \vert\nabla_\Gamma q_\Gamma\vert^2 \,d\sigma\,dt\\
				&\leq C \Bigg[ \lambda^{12}\mathbb{E}\int_{Q_0} \theta^2\gamma^{7} p^2 \,dx\,dt +\lambda\mathbb{E}\int_Q \theta^2\gamma^2 P^2 \,dx\,dt + \lambda\mathbb{E}\int_\Sigma \theta^2\gamma \widehat{P}\,^2 \,d\sigma\,dt\Bigg].
			\end{aligned}
		\end{align}
	\end{thm}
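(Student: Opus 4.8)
The plan is to combine the two Carleman estimates of Lemmas~\ref{lmm4.2} and~\ref{lmm4.33}, applied respectively to the forward component $q$ and the backward component $p$, and then to remove the resulting local observation of $q$ by exploiting the cascade coupling through the backward equation.

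First I would apply Lemma~\ref{lmm4.2} to \eqref{adjoforr4.8}, whose source terms are $f=a_1q$, $g=a_2q$, $f_\Gamma=b_1q_\Gamma$, $g_\Gamma=b_2q_\Gamma$, and Lemma~\ref{lmm4.33} to \eqref{adback4.77}, whose source terms are $F=-a_1p-a_2P-\chi_\mathcal{O}q$ and $F_\Gamma=-b_1p_\Gamma-b_2\widehat P-\chi_{\mathcal{O}_\Gamma}q_\Gamma$. Since $a_1,a_2,b_1,b_2$ are bounded and $\gamma\ge C>0$, every potential contribution carries fewer powers of $\lambda\gamma$ than the corresponding left-hand side term and is absorbed for $\lambda$ large; moreover $a_2^2P^2$ (resp. $b_2^2\widehat P^2$) is dominated by the $P$-term (resp. $\widehat P$-term) already present on the right-hand side of \eqref{carback5.8}. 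The genuinely coupling contributions $\mathbb{E}\int_0^T\!\int_\mathcal{O}\theta^2q^2$ and $\mathbb{E}\int_0^T\!\int_{\mathcal{O}_\Gamma}\theta^2q_\Gamma^2$ produced by $F,F_\Gamma$ are, using $\gamma\ge C$, bounded by $C\lambda^{-3}$ times the left-hand side of \eqref{carfro5.2}, hence absorbed after summing the two estimates. At this stage the sum of the two left-hand sides is controlled by
$$C\Big[\lambda^3\mathbb{E}\int_0^T\!\int_{G_1}\theta^2\gamma^3 p^2\,dx\,dt+\lambda^3\mathbb{E}\int_0^T\!\int_{G_1}\theta^2\gamma^3 q^2\,dx\,dt+\lambda^2\mathbb{E}\int_Q\theta^2\gamma^2P^2\,dx\,dt+\lambda^2\mathbb{E}\int_\Sigma\theta^2\gamma\widehat P^2\,d\sigma\,dt\Big].$$

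The crucial step is to eliminate the local observation of $q$. Since $G_1\Subset G_0\cap\mathcal{O}$, I would fix $\zeta\in C_c^\infty(G_0\cap\mathcal{O})$ with $0\le\zeta\le1$ and $\zeta\equiv1$ on $G_1$, and set $w=\zeta^2\theta^2\gamma^3$; note $\chi_\mathcal{O}\equiv1$ on $\mathrm{supp}\,\zeta$, while $w$ vanishes at $t=0,T$ and near $\Gamma$. Applying Itô's formula to $w\,pq$, integrating over $G\times(0,T)$ and taking expectations, the endpoint terms vanish; the zeroth-order potential terms cancel pairwise (the $a_1pq$ terms between the drifts of $p$ and $q$, and the $a_2Pq$ term against the cross-variation $dp\,dq=a_2Pq\,dt$); and the coupling $-\chi_\mathcal{O}q$ produces exactly $-\mathbb{E}\int w q^2$. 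After integrating by parts in $x$ (no boundary terms, by the interior support of $w$) and, decisively, integrating by parts once more to move the surviving spatial derivative from $p$ onto $q$, one reaches the identity
$$\mathbb{E}\int_0^T\!\int_G w\,q^2\,dx\,dt=\mathbb{E}\int_0^T\!\int_G w_t\,pq\,dx\,dt-\mathbb{E}\int_0^T\!\int_G (\triangle w)\,pq\,dx\,dt-2\,\mathbb{E}\int_0^T\!\int_G p\,\nabla w\cdot\nabla q\,dx\,dt.$$
The point of this last integration by parts is that the right-hand side now involves $\nabla q$, which is controlled globally by the left-hand side of \eqref{carfro5.2}, rather than $\nabla p$, which is not available locally.

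Finally I would use the weight bounds $|w_t|\le C\lambda\theta^2\gamma^5$, $|\triangle w|\le C\lambda^2\theta^2\gamma^5$, $|\nabla w|\le C\lambda\theta^2\gamma^4$ on $\mathrm{supp}\,\zeta$, multiply the identity by $\lambda^3$, and apply Young's inequality: each bilinear term is split so that the $q$-factor (resp. the $\nabla q$-factor) is paired with $\lambda^3\theta^2\gamma^3q^2$ (resp. with $\lambda\theta^2\gamma|\nabla q|^2$) and absorbed into the left-hand side, while the $p$-factor is collected as a local term $C\lambda^{7}\mathbb{E}\int_{Q_0}\theta^2\gamma^{7}p^2$, since $\mathrm{supp}\,\zeta\subset G_0$. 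Substituting this bound for $\lambda^3\mathbb{E}\int_{G_1}\theta^2\gamma^3q^2$, absorbing the small multiples of the left-hand side, and noting $\lambda^3\mathbb{E}\int_{G_1}\theta^2\gamma^3p^2\le C\lambda^7\mathbb{E}\int_{Q_0}\theta^2\gamma^7p^2$, yields the full left-hand side bounded by $C\big[\lambda^{7}\mathbb{E}\int_{Q_0}\theta^2\gamma^{7}p^2+\lambda^2\mathbb{E}\int_Q\theta^2\gamma^2P^2+\lambda^2\mathbb{E}\int_\Sigma\theta^2\gamma\widehat P^2\big]$. Dividing by $\lambda$ matches both the left-hand side and the $P,\widehat P$ powers of \eqref{carlestcasca}, and since $\lambda\ge1$ the resulting local exponent may be freely increased to $\lambda^{12}$, which completes the proof. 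The main obstacle is precisely the cascade step of the previous paragraph: producing an identity for the local mass of $q$ in which only $\nabla q$ (and not $\nabla p$) appears. A secondary technical point is that, as the solutions lie only in $\mathbb{H}^1$, the application of Itô's formula and the integrations by parts involving $\triangle p$, $\triangle q$ must be justified by a density/approximation argument.
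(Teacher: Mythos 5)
Your proposal is correct and follows essentially the same route as the paper: combine the forward and backward Carleman estimates of Lemmas~\ref{lmm4.2} and~\ref{lmm4.33}, then eliminate the local observation of $q$ by applying It\^o's formula to a cut-off-weighted product $pq$ and integrating by parts twice so that only $\nabla q$ (controlled globally) appears, with the $p$-factor localized in $G_0$. The only differences are cosmetic (your weight $\zeta^2\theta^2\gamma^3$ versus the paper's $\zeta\theta^2\gamma^3$ with $\nabla\zeta/\zeta^{1/2}\in L^\infty$, and your sharper exponent $\lambda^{7}$ in place of $\lambda^{13}$, which you correctly note still implies \eqref{carlestcasca}).
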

	\begin{proof}
		We first apply the Carleman estimate \eqref{carback5.8} for equation \eqref{adback4.77}, then there exists a constant $C>0$ and a large $\lambda_2\geq1$ such that for all $\lambda\geq\lambda_2$, we have 
		
		\begin{align}\label{estt4.3f}
			\begin{aligned}
				&\,\lambda^3\mathbb{E}\int_Q \theta^2\gamma^3p^2\,dx\,dt + \lambda^3\mathbb{E}\int_\Sigma \theta^2\gamma^3p_\Gamma^2\,d\sigma\,dt\\
				&+\lambda\mathbb{E}\int_Q \theta^2\gamma \vert\nabla p\vert^2\,dx\,dt + \lambda\mathbb{E}\int_\Sigma \theta^2\gamma \vert\nabla_\Gamma p_\Gamma\vert^2 \,d\sigma\,dt\\
				&\leq C \Bigg[ \lambda^3\mathbb{E}\int_0^T\int_{G_1} \theta^2\gamma^3 p^2 \,dx\,dt + \mathbb{E}\int_0^T\int_\mathcal{O} \theta^2 q^2 \,dx\,dt \\
				&
				\qquad\;+\mathbb{E}\int_0^T\int_{\mathcal{O}_\Gamma} \theta^2 q_\Gamma^2 \,d\sigma\,dt+\lambda^2\mathbb{E}\int_Q \theta^2\gamma^2 P^2 \,dx\,dt + \lambda^2\mathbb{E}\int_\Sigma \theta^2\gamma \widehat{P}\,^2 \,d\sigma\,dt\Bigg].
			\end{aligned}
		\end{align}
		
		On the other hand, by applying the Carleman estimate \eqref{carfro5.2} for equation \eqref{adjoforr4.8}, there exists a  constant $C>0$ and a large $\lambda_1\geq1$ so that for all $\lambda\geq\lambda_1$
		\begin{align}\label{estimm5.4}
			\begin{aligned}
				&\;\lambda^3\mathbb{E}\int_Q \theta^2\gamma^3q^2\,dx\,dt +\lambda^3\mathbb{E}\int_\Sigma \theta^2\gamma^3q_\Gamma^2\,d\sigma\,dt\\
				&+\lambda\mathbb{E}\int_Q \theta^2\gamma \vert\nabla q\vert^2\,dx\,dt+ \lambda\mathbb{E}\int_\Sigma \theta^2\gamma \vert\nabla_\Gamma q_\Gamma\vert^2 \,d\sigma\,dt
				\\&\leq C \lambda^3\mathbb{E}\int_0^T\int_{G_1} \theta^2\gamma^3 q^2 \,dx\,dt. 
		\end{aligned}\end{align}
		
		In the rest of the proof, we take $\lambda \geq \lambda^* = \max(\lambda_1, \lambda_2)$, which, if necessary, will be chosen to be large enough to absorb some lower-order terms. Let us now estimate the term on the right hand side of \eqref{estimm5.4}. For that, let us choose $\zeta\in C^{\infty}(\mathbb{R}^N)$ such that
		\begin{subequations}\label{assmzeta}
			\begin{align}
				&0\leq\zeta\leq 1,\,\quad \zeta=1 \,\, \textnormal{in}\,\, G_1,\quad \,\textnormal{supp}(\zeta)\subset G_0,\label{assmzeta1}\\ &\quad
				\frac{\Delta\zeta}{\zeta^{1/2}}\in L^\infty(G),\quad
				\frac{\nabla\zeta}{\zeta^{1/2}}\in L^\infty(G;\mathbb{R}^N)\label{assmzeta2}.
			\end{align}
		\end{subequations}
		Such a function $\zeta$ exists. Indeed, by standard arguments, one can take a cut-off function $\zeta_0 \in C^\infty_0(\mathbb{R}^N)$ satisfying \eqref{assmzeta1} and then choosing $\zeta = \zeta_0^4$, which easily verifies \eqref{assmzeta}. By using Itô's formula for $d(\zeta\theta^2\gamma^3pq)$, we have
		
		\begin{align}\label{ienbyItoformula}
			d(\zeta\theta^2\gamma^3pq)=\zeta(\theta^2\gamma^3)_tpq dt+\zeta\theta^2\gamma^3\left[(-\Delta p-\chi_{\mathcal{O}}q)q+p\Delta q \right]dt+\{...\}dW(t).
		\end{align}
		Integrating \eqref{ienbyItoformula} on $Q$ and taking the expectation on both sides, we deduce that
		\begin{align*}
			\mathbb{E}\int_Q \zeta\theta^2\gamma^3\chi_{\mathcal{O}}q^2 \,dx\,dt= \mathbb{E}\int_Q \zeta(\theta^2\gamma^3)_t pq \,dx\,dt+\mathbb{E}\int_Q \zeta\theta^2\gamma^3(-q\Delta p+p\Delta q) \,dx\,dt,
		\end{align*}
		which implies that
		\begin{align*}
			\begin{aligned}
				\mathbb{E}\int_0^T\int_{G_1} \theta^2\gamma^3q^2 \,dx\,dt&\leq \mathbb{E}\int_Q \zeta(\theta^2\gamma^3)_t pq \,dx\,dt+\mathbb{E}\int_Q \zeta\theta^2\gamma^3(-q\Delta p+p\Delta q) \,dx\,dt\\
				&=I+J.
			\end{aligned}
		\end{align*}
		Fix a parameter $\rho>0$. Taking a large enough $\lambda$, it is easy to see that
		$$|(\theta^2\gamma^3)_t|\leq C\lambda\theta^2\gamma^5.$$
		By Young's inequality, it follows that
		\begin{align}\label{I1165}
			I\leq \rho\mathbb{E}\int_Q \theta^2\gamma^3 q^2 \,dx\,dt+C(\rho)\lambda^{2}\mathbb{E}\int_{Q_0} \theta^2\gamma^7p^2 \,dx\,dt.
		\end{align}
		On the other hand, by integration by parts, it is not difficult to verify that
		\begin{align}\label{functioJ}
			J=-\lambda^3\mathbb{E}\int_Q \gamma^3\Delta(\zeta\theta^2)pq \,dx\,dt-2\lambda^3\mathbb{E}\int_Q \gamma^3 p\nabla q\cdot\nabla(\zeta\theta^2) \,dx\,dt.
		\end{align}
		Recalling \eqref{assmzeta} and taking large $\lambda$, we have that
		\begin{align}\label{assumpongradLapl}
			|\nabla(\zeta\theta^2)|\leq C\lambda\zeta^{1/2}\theta^2\gamma,\qquad|\Delta(\zeta\theta^2)|\leq C\lambda^2\zeta^{1/2}\theta^2\gamma^2.
		\end{align}
		Using \eqref{assumpongradLapl} in \eqref{functioJ}, it follows that
		\begin{align*}
			J\leq C\lambda^5\mathbb{E}\int_Q \zeta^{1/2}\theta^2\gamma^5|p||q|\,dx\,dt+C\lambda^4\mathbb{E}\int_Q \zeta^{1/2}\theta^2\gamma^4|p||\nabla q|\,dx\,dt,
		\end{align*}
		which provides that
		\begin{align}\label{I2258}
			J\leq\rho\mathbb{E}\int_Q \theta^2\gamma^3 q^2 \,dx\,dt+\rho\lambda^{-2}\mathbb{E}\int_Q \theta^2\gamma |\nabla q|^2 \,dx\,dt+C(\rho)\lambda^{10}\mathbb{E}\int_{Q_0} \theta^2\gamma^{7} p^2 \,dx\,dt.
		\end{align}
		From \eqref{I1165} and \eqref{I2258}, we get 
		\begin{align}\label{4.111ineq}
			\begin{aligned}
				\lambda^3\mathbb{E}\int_0^T\int_{G_1} \theta^2\gamma^3q^2 \,dx\,dt&\leq 
				2\rho\lambda^3\mathbb{E}\int_Q \theta^2\gamma^3 q^2 \,dx\,dt+C(\rho)\lambda^{5}\mathbb{E}\int_{Q_0} \theta^2\gamma^7p^2 \,dx\,dt\\
				&\quad+\rho\lambda\mathbb{E}\int_Q \theta^2\gamma |\nabla q|^2 \,dx\,dt+C(\rho)\lambda^{13}\mathbb{E}\int_{Q_0} \theta^2\gamma^{7} p^2 \,dx\,dt.
			\end{aligned}
		\end{align}
		Taking a large $\lambda$ in \eqref{4.111ineq}, we obtain that
		\begin{align}\label{est511}
			\begin{aligned}
				\lambda^3\mathbb{E}\int_0^T\int_{G_1} \theta^2\gamma^3q^2 \,dx\,dt&\leq 
				2\rho\lambda^3\mathbb{E}\int_Q \theta^2\gamma^3 q^2 \,dx\,dt+\rho\lambda\mathbb{E}\int_Q \theta^2\gamma |\nabla q|^2 \,dx\,dt\\
				&\quad+C(\rho)\lambda^{13}\mathbb{E}\int_{Q_0} \theta^2\gamma^{7} p^2 \,dx\,dt.
			\end{aligned}
		\end{align}
		Combining \eqref{est511} and \eqref{estimm5.4}, and taking a sufficiently small $\rho$, we end up with
		\begin{align}\label{finestt512}
			\begin{aligned}
				&\;\lambda^3\mathbb{E}\int_Q \theta^2\gamma^3q^2\,dx\,dt +\lambda^3\mathbb{E}\int_\Sigma \theta^2\gamma^3q_\Gamma^2\,d\sigma\,dt\\
				&+\lambda\mathbb{E}\int_Q \theta^2\gamma \vert\nabla q\vert^2\,dx\,dt+ \lambda\mathbb{E}\int_\Sigma \theta^2\gamma \vert\nabla_\Gamma q_\Gamma\vert^2 \,d\sigma\,dt
				\\&\leq C  \lambda^{13}\mathbb{E}\int_{Q_0} \theta^2\gamma^{7} p^2 \,dx\,dt. 
		\end{aligned}\end{align}
		Adding \eqref{finestt512} and \eqref{estt4.3f}, and taking a large enough $\lambda$, we finally deduce our desired Carleman estimate \eqref{carlestcasca}. This concludes the proof of Theorem \ref{carlfor insencontro}.
	\end{proof}
	\subsection{Observability inequality}
	Based on the Carleman estimate \eqref{carlestcasca}, we  have the following observability inequality.
	\begin{prop}\label{propo5.1obseine}
		There exists positive constants $C$ and $M$ depending only on $G, G_0, \mathcal{O}, T, a_1, a_2, b_1$ and $b_2$ such that for any $(q_0,q_{0,\Gamma})\in L^2_{\mathcal{F}_0}(\Omega;\mathbb{L}^2)$, the solution $(p,p_\Gamma,P,\widehat{P};q,q_\Gamma)$ of the coupled system \eqref{adback4.77}-\eqref{adjoforr4.8} satisfies that
		\begin{align}\label{obseine4.9}
			\begin{aligned}
				&\mathbb{E}\int_Q \textnormal{exp}\left(-Mt^{-1}\right) p^2 \,dx\,dt+\mathbb{E}\int_\Sigma \textnormal{exp}\left(-Mt^{-1}\right) p_\Gamma^2 \,d\sigma\,dt\\
				&\leq C\left(\mathbb{E}\int_{Q_0} p^2 \,dx\,dt+\mathbb{E}\int_{Q} P^2 \,dx\,dt+\mathbb{E}\int_{\Sigma} \widehat{P}^2 \,d\sigma\,dt\right).
			\end{aligned}
		\end{align}
	\end{prop}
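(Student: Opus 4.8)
The plan is to derive the observability inequality \eqref{obseine4.9} directly from the Carleman estimate \eqref{carlestcasca} by combining two standard ingredients: first, bounding the weight functions $\theta$ and $\gamma$ from below on a suitable time subinterval where they are non-degenerate, and second, an energy estimate that transfers information backward in time for the backward equation \eqref{adback4.77}. The starting point is that $\theta^2\gamma^3 = e^{2\lambda\alpha}\gamma^3$ is bounded below by a positive constant (times $\mathrm{exp}(-Mt^{-1})$) away from $t=0$ and $t=T$, while the weight on the right-hand side of \eqref{carlestcasca}, namely $\theta^2\gamma^7$ on $Q_0$ and $\theta^2\gamma^2$, $\theta^2\gamma$ in the $P,\widehat{P}$ terms, is bounded above by a constant on any fixed time window $[T/4,3T/4]$. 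Fixing $\lambda=\lambda^*$ once and for all absorbs all the $\lambda$-powers into the constant $C$.

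The key steps, in order, are as follows. First I would fix $\lambda=\lambda^*$ in \eqref{carlestcasca} so that all factors become absolute constants. Second, I restrict the left-hand side integrals to the subinterval $(T/4,3T/4)\times G$ (resp.\ on $\Sigma$); on this set $\gamma$ is bounded below, so $\theta^2\gamma^3\geq c\,\mathrm{exp}(-Mt^{-1})$ for an appropriate $M$ depending on $\mu^*$, $\lambda^*$ and $|\psi|_\infty$. This yields a lower bound of the form $c\,\mathbb{E}\int_{T/4}^{3T/4}\!\int_G \mathrm{exp}(-Mt^{-1})\,p^2 + (\text{boundary term})$ for the left side. Third, on the right-hand side I bound $\theta^2\gamma^7\leq C$ on $Q_0$, $\theta^2\gamma^2\leq C$ on $Q$, and $\theta^2\gamma\leq C$ on $\Sigma$ (these weights are bounded on all of $Q$ since $\theta\le 1$ and $\gamma^k$ blows up only like $t^{-k}(T-t)^{-k}$, which is tamed by $\theta=e^{\lambda\alpha}$ decaying super-exponentially at the endpoints). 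This produces exactly the three terms $\mathbb{E}\int_{Q_0}p^2$, $\mathbb{E}\int_Q P^2$, $\mathbb{E}\int_\Sigma \widehat P^2$ on the right.

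The remaining and genuinely substantive step is to upgrade the estimate on $(T/4,3T/4)$ to the full interval $(0,T)$ required by \eqref{obseine4.9}. Here I would invoke an energy (dissipation) estimate for the backward system \eqref{adback4.77}: since the equation runs backward from the terminal data $(p,p_\Gamma)|_{t=T}=(0,0)$, applying It\^o's formula to $\mathbb{E}|(p(t),p_\Gamma(t))|_{\mathbb{L}^2}^2$ and using the bounded potentials $a_1,a_2,b_1,b_2$ together with the coupling term $\chi_{\mathcal O}q$, $\chi_{\mathcal O_\Gamma}q_\Gamma$ gives a Gronwall-type inequality showing that $\mathbb{E}|(p(t),p_\Gamma(t))|_{\mathbb L^2}^2$ is controlled, up to the forcing from $q$ and from $P,\widehat P$, by its value at any later time. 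Because $q$ solves the \emph{forward} equation \eqref{adjoforr4.8} and $\mathrm{exp}(-Mt^{-1})$ vanishes to infinite order at $t=0$, the contribution near $t=0$ is absorbed, so the integral over $(0,T/4)$ is dominated by the integral over $(T/4,3T/4)$ plus the controlled source terms. Combining this propagation with the Carleman-derived estimate on the middle interval and re-absorbing the $q$-terms (which are already controlled by the right-hand side of \eqref{carlestcasca}) yields \eqref{obseine4.9}. The main obstacle I expect is carefully handling the forward-backward coupling in the energy step: one must ensure the $q$ and $q_\Gamma$ terms appearing in the energy inequality for $p$ are absorbed using the full left-hand side of \eqref{carlestcasca} rather than creating a circular dependence, which is why the Carleman estimate must be applied on the whole $Q$ first and the energy argument used only to extend the weight to $t\to 0^+$.
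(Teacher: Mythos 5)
Your overall toolbox (fix $\lambda=\lambda^*$, compare $\mathrm{exp}(-Mt^{-1})$ with the Carleman weight $\theta^2\gamma^3$, bound the right-hand weights $\theta^2\gamma^7$, $\theta^2\gamma^2$, $\theta^2\gamma$ by constants, and supplement with energy estimates for the forward--backward system) is the right one and matches the paper's, but you have aimed the two ingredients at the wrong ends of the time interval, and as written the argument has a genuine gap. The pointwise comparison $\mathrm{exp}(-Mt^{-1})\leq C\,\theta^2\gamma^3$ does \emph{not} need to be restricted to $(T/4,3T/4)$: since $\theta^2\gamma^3\gtrsim \exp\bigl(-2\lambda^*(e^{2\mu^*|\psi|_\infty}-1)/(t(T-t))\bigr)$ and $T-t\geq T/2$ there, it holds on all of $(0,T/2)$ for $M$ large, so the interval near $t=0$ is handled \emph{directly} by the $p$-terms on the left of \eqref{carlestcasca}, with no propagation argument at all. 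Conversely, the comparison genuinely \emph{fails} near $t=T$, where $\mathrm{exp}(-Mt^{-1})\to e^{-M/T}>0$ while $\theta\to 0$; this is the only place an energy argument is needed, and it is precisely the part you never address (you discuss extending to $(0,T/4)$ but say nothing about $(3T/4,T)$).

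Moreover, the propagation you propose for $(0,T/4)$ would not close. The backward energy estimate with $(p,p_\Gamma)|_{t=T}=(0,0)$ gives $\mathbb{E}|(p(t),p_\Gamma(t))|^2_{\mathbb{L}^2}\leq C\,\mathbb{E}\int_t^T(|q|^2_{L^2(G)}+|q_\Gamma|^2_{L^2(\Gamma)})\,ds$; for $t$ near $0$ this involves the \emph{unweighted} norm of $q$ on $(0,T/4)$, which is essentially the arbitrary initial datum $(q_0,q_{0,\Gamma})$ and is controlled neither by the observation terms nor by the Carleman left-hand side (whose weight $\theta^2\gamma^3$ vanishes super-exponentially at $t=0$); the factor $\mathrm{exp}(-Mt^{-1})$ multiplies $p^2$, not this $q$-source, so it does not ``absorb'' it. The correct chain, which is what the paper does, is: (i) backward energy estimate for $p$ to get $\mathbb{E}\int_{T/2}^{T}\int_G p^2\leq C\,\mathbb{E}\int_{T/2}^{T}\int_G q^2+\cdots$; (ii) the \emph{forward} energy estimate for $q$, namely $\mathbb{E}|q(t_2)|^2\leq C\,\mathbb{E}|q(t_1)|^2$ for $t_1<t_2$, to pull $\mathbb{E}\int_{T/2}^{T}\int_G q^2$ back to $\mathbb{E}\int_{T/4}^{3T/4}\int_G q^2$, where $\theta^2\gamma^3$ is bounded below; (iii) the $q$-terms on the left of \eqref{carlestcasca} to bound that middle-window integral by the observation terms. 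Without step (ii) the $q$-source near $t=T$ suffers the same weight degeneracy as $p$ and the circularity you worry about is not resolved.
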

	\begin{proof}
		In what follows, we fix $\lambda=\lambda^*$ given in Theorem \ref{carlfor insencontro}, and we first derive the usual energy estimate to the forward equation \eqref{adjoforr4.8}. Let $t_1,t_2\in(0,T)$ such that $t_1<t_2$, then  using Itô's formula, we compute $d|(q,q_\Gamma)|_{\mathbb{L}^2}^2$, integrating the obtained equality on $(t_1,t_2)$ and taking the expectation on both sides, we get
		\begin{align*}
			\mathbb{E}\int_{t_1}^{t_2}\int_G dq^2 \,dx +   \mathbb{E}\int_{t_1}^{t_2}\int_\Gamma dq_\Gamma^2 \,d\sigma&=-2\mathbb{E}\int_{t_1}^{t_2}\int_G |\nabla q|^2 \,dx\,dt+2\mathbb{E}\int_{t_1}^{t_2}\int_G a_1q^2 \,dx\,dt\\
			&\hspace{0.4cm}+\mathbb{E}\int_{t_1}^{t_2}\int_G a_2^2q^2 \,dx\,dt-2\mathbb{E}\int_{t_1}^{t_2}\int_\Gamma |\nabla_\Gamma q_\Gamma|^2\,d\sigma\,dt\\
			&\hspace{0.4cm}+2\mathbb{E}\int_{t_1}^{t_2}\int_\Gamma b_1 q_\Gamma^2 \,d\sigma\,dt+\mathbb{E}\int_{t_1}^{t_2}\int_\Gamma b_2^2 q_\Gamma^2 \,d\sigma\,dt,
		\end{align*}
		which implies that
		\begin{align}\label{ineq4.155}
			\begin{aligned}
				\mathbb{E}\int_G q^2(t_2)\,dx+\mathbb{E}\int_\Gamma q_\Gamma^2(t_2)\,d\sigma&\leq \mathbb{E}\int_G q^2(t_1)dx+\mathbb{E}\int_\Gamma q_\Gamma^2(t_1)d\sigma\\
				&\quad+C\left(\mathbb{E}\int_{t_1}^{t_2}\int_G q^2 \,dx\,dt+\mathbb{E}\int_{t_1}^{t_2}\int_\Gamma q_\Gamma^2\,d\sigma\,dt\right).
			\end{aligned}
		\end{align}
		Using Gronwall's inequality for \eqref{ineq4.155}, we obtain that
		$$\mathbb{E}\int_G q^2(t_2)\,dx+\mathbb{E}\int_\Gamma q_\Gamma^2(t_2)\,d\sigma\leq C\left(\mathbb{E}\int_G q^2(t_1)\,dx+\mathbb{E}\int_\Gamma q_\Gamma^2(t_1)\,d\sigma\right).$$
		Then it follows that for any $t\in(T/4,3T/4)$
		\begin{align}\label{inee4.1666i}
			\mathbb{E}\int_G q^2\left(t+\frac{T}{4}\right)\,dx+\mathbb{E}\int_\Gamma q_\Gamma^2\left(t+\frac{T}{4}\right)\,d\sigma\leq C\left(\mathbb{E}\int_G q^2(t)\,dx+\mathbb{E}\int_\Gamma q_\Gamma^2(t)\,d\sigma\right).
		\end{align}
		Integrating \eqref{inee4.1666i} with respect to  $t\in(T/4,3T/4)$, we find that
		\begin{align}\label{estimmqp11}
			\mathbb{E}\int_{T/2}^T\int_G q^2 \,dx\,dt+\mathbb{E}\int_{T/2}^T\int_\Gamma q_\Gamma^2\,d\sigma\,dt\leq C\left(\mathbb{E}\int_{T/4}^{3T/4}\int_G q^2 \,dx\,dt+\mathbb{E}\int_{T/4}^{3T/4}\int_\Gamma q_\Gamma^2 \,d\sigma\,dt\right).
		\end{align}
		
		On the other hand, for any $t\in(0,T)$, we compute $d|(p,p_\Gamma)|_{\mathbb{L}^2}^2$, integrating the obtained equality on $(t,T)$ and taking the expectation on both sides, we arrive at
		\begin{align*}
			\mathbb{E}\int_t^T\int_G dp^2 \,dx+\mathbb{E}\int_t^T\int_\Gamma dp_\Gamma^2 \,d\sigma&=2\mathbb{E}\int_t^T\int_G |\nabla p|^2 \,dx\,ds-2\mathbb{E}\int_t^T\int_G a_1p^2 \,dx\,ds\\
			&\hspace{0.4cm}-2\mathbb{E}\int_t^T\int_G a_2pP \,dx\,ds-2\mathbb{E}\int_t^T\int_G \chi_\mathcal{O} pq \,dx\,ds\\
			&\hspace{0.4cm}+\mathbb{E}\int_t^T\int_G P^2 \,dx\,ds+2\mathbb{E}\int_t^T\int_G |\nabla_\Gamma p_\Gamma|^2 \,d\sigma \,ds\\
			&\hspace{0.4cm}-2\mathbb{E}\int_t^T\int_\Gamma b_1 p_\Gamma^2 \,d\sigma \,ds -2\mathbb{E}\int_t^T\int_\Gamma b_2 p_\Gamma \widehat{P} \,d\sigma \,ds\\
			&\hspace{0.4cm}-2\mathbb{E}\int_t^T\int_\Gamma \chi_{\mathcal{O}_\Gamma} p_\Gamma q_\Gamma \,d\sigma \,ds+\mathbb{E}\int_t^T\int_\Gamma |\widehat{P}|^2 \,d\sigma \,ds,
		\end{align*}
		which gives
		\begin{align}\label{ineqq1.18}
			\begin{aligned}
				-\mathbb{E}\int_t^T\int_G dp^2 \,dx-\mathbb{E}\int_t^T\int_\Gamma dp_\Gamma^2 \,d\sigma\leq &C\Bigg(\mathbb{E}\int_t^T\int_G p^2 \,dx \,ds + \mathbb{E}\int_t^T\int_\Gamma p_\Gamma^2 \,d\sigma \,ds \\
				&\hspace{0.5cm}+ \mathbb{E}\int_t^T\int_G q^2 \,dx \,ds + \mathbb{E}\int_t^T\int_\Gamma q_\Gamma^2 \,d\sigma \,ds\Bigg).
			\end{aligned}
		\end{align}
		Since $(p(T,\cdot),p_\Gamma(T,\cdot))=(0,0)$, then applying Gronwall's inequality in \eqref{ineqq1.18}, we deduce that
		$$\mathbb{E}\int_G p^2(t)\,dx+\mathbb{E}\int_\Gamma p_\Gamma^2(t) \,d\sigma\leq C\left(\mathbb{E}\int_t^T\int_G q^2 \,dx \,ds+\mathbb{E}\int_t^T\int_\Gamma q_\Gamma^2 \,d\sigma \,ds\right).$$
		It follows that  for any $t\in(T/2,T)$
		\begin{align}\label{estimmqp22}
			\mathbb{E}\int_{T/2}^T\int_G p^2 \,dx\,dt+\mathbb{E}\int_{T/2}^T\int_\Gamma p_\Gamma^2\,d\sigma\,dt\leq C\left(\mathbb{E}\int_{T/2}^{T}\int_G q^2 \,dx\,dt+\mathbb{E}\int_{T/2}^{T}\int_\Gamma q_\Gamma^2 \,d\sigma\,dt\right).
		\end{align}
		It is not difficult to see that there exists a large constant $M$ so that
		$$\textnormal{exp}(-Mt^{-1})\leq C \textnormal{exp}\left(2\lambda^*\frac{e^{\mu^*\psi(x)}-e^{2\mu^*|\psi|_\infty}}{t(T-t)}\right)\frac{1}{t^3(T-t)^3},\quad\textnormal{in}\quad(0,T/2)\times\overline{G},$$
		which implies that
		\begin{align}\label{estimmqpfir11}
			\begin{aligned}
				&\mathbb{E}\int_0^{T/2}\int_G \textnormal{exp}(-Mt^{-1})p^2 \,dx\,dt+\mathbb{E}\int_0^{T/2}\int_\Gamma \textnormal{exp}(-Mt^{-1})p_\Gamma^2\,d\sigma\,dt\\
				&\leq C\left(\mathbb{E}\int_Q \theta^2\gamma^3 p^2 \,dx\,dt+\mathbb{E}\int_\Sigma \theta^2\gamma^3 p_\Gamma^2 \,d\sigma\,dt\right).
			\end{aligned}
		\end{align}
		Using our Carleman estimate \eqref{carlestcasca} in the right hand side of \eqref{estimmqpfir11}, we obtain that
		\begin{align}\label{firsesto518}
			\begin{aligned}
				&\mathbb{E}\int_0^{T/2}\int_G \textnormal{exp}(-Mt^{-1})p^2 \,dx\,dt+\mathbb{E}\int_0^{T/2}\int_\Gamma \textnormal{exp}(-Mt^{-1})p_\Gamma^2\,d\sigma\,dt\\
				&\leq C \left( \mathbb{E}\int_{Q_0} p^2 \,dx\,dt +\mathbb{E}\int_Q P^2 \,dx\,dt + \mathbb{E}\int_\Sigma \widehat{P}\,^2 \,d\sigma\,dt\right).
			\end{aligned}
		\end{align}
		On the other hand, combining \eqref{estimmqp11} and \eqref{estimmqp22}, we have 
		\begin{align}\label{estimmqp}
			\mathbb{E}\int_{T/2}^T\int_G p^2 \,dx\,dt+\mathbb{E}\int_{T/2}^T\int_\Gamma p_\Gamma^2\,d\sigma\,dt\leq C\left(\mathbb{E}\int_{T/4}^{3T/4}\int_G q^2 \,dx\,dt+\mathbb{E}\int_{T/4}^{3T/4}\int_\Gamma q_\Gamma^2 \,d\sigma\,dt\right).
		\end{align}
		Notice that
		\begin{align*}
			\begin{aligned}
				&\mathbb{E}\int_{T/2}^T\int_G \textnormal{exp}(-Mt^{-1})p^2 \,dx\,dt+\mathbb{E}\int_{T/2}^T\int_\Gamma \textnormal{exp}(-Mt^{-1})p_\Gamma^2\,d\sigma\,dt\\
				&\leq \mathbb{E}\int_{T/2}^T\int_G p^2 \,dx\,dt+\mathbb{E}\int_{T/2}^T\int_\Gamma p_\Gamma^2\,d\sigma\,dt,
			\end{aligned}
		\end{align*}
		which, together with \eqref{estimmqp}, yields
		\begin{align}\label{ineqq4.244i}
			\begin{aligned}
				&\mathbb{E}\int_{T/2}^T\int_G \textnormal{exp}(-Mt^{-1})p^2 \,dx\,dt+\mathbb{E}\int_{T/2}^T\int_\Gamma \textnormal{exp}(-Mt^{-1})p_\Gamma^2\,d\sigma\,dt\\
				&\leq C\left(\mathbb{E}\int_{T/4}^{3T/4}\int_G \theta^2\gamma^3 q^2 \,dx\,dt+\mathbb{E}\int_{T/4}^{3T/4}\int_\Gamma \theta^2\gamma^3 q_\Gamma^2 \,d\sigma\,dt\right).
			\end{aligned}
		\end{align}
		Using again the Carleman estimate \eqref{carlestcasca} in \eqref{ineqq4.244i}, we conduce that
		\begin{align}\label{secestim522}
			\begin{aligned}
				&\mathbb{E}\int_{T/2}^T\int_G \textnormal{exp}(-Mt^{-1})p^2 \,dx\,dt+\mathbb{E}\int_{T/2}^T\int_\Gamma \textnormal{exp}(-Mt^{-1})p_\Gamma^2\,d\sigma\,dt\\
				&\leq C \left( \mathbb{E}\int_{Q_0} p^2 \,dx\,dt +\mathbb{E}\int_Q P^2 \,dx\,dt + \mathbb{E}\int_\Sigma \widehat{P}\,^2 \,d\sigma\,dt\right).
			\end{aligned}
		\end{align}
		Adding  \eqref{secestim522} and \eqref{firsesto518}, we finally deduce the desired observability inequality \eqref{obseine4.9}. This completes the proof of Proposition \ref{propo5.1obseine}.
	\end{proof}
	\section{Main result: Proof of Theorem \ref{thmm1.3ins}}\label{sec5}
	This section is devoted to proving our main result on the existence of insensitizing controls for equation \eqref{ass15}, presented in Theorem~\ref{thmm1.3ins}.
	
	\begin{proof}[Proof of Theorem \ref{thmm1.3ins}]
		Let us consider the following linear subspace of the space 
		\( L^2_{\mathcal{F}}(0,T;L^2(G_0)) \times L^2_{\mathcal{F}}(0,T;L^2(G)) \times L^2_{\mathcal{F}}(0,T;L^2(\Gamma)) \):
		\begin{align*}
			\mathcal{H} = \Big\{ &(\chi_{G_0}p, P, \widehat{P}) \mid \; (p, p_\Gamma, P, \widehat{P}; q, q_\Gamma) \; \textnormal{is the solution of} \; \eqref{adback4.77}-\eqref{adjoforr4.8}  
			\;\textnormal{with}\\
			&\quad\textnormal{some} \; (q_0, q_{0,\Gamma}) \in L^2_{\mathcal{F}_0}(\Omega; \mathbb{L}^2) \Big\}
		\end{align*}
		Define the linear functional \(\mathcal{L}\) on \(\mathcal{H}\) as follows:
		\[
		\mathcal{L}(\chi_{G_0}p, P, \widehat{P}) = -\mathbb{E} \int_Q p \xi_1 \, dx \, dt - \mathbb{E} \int_\Sigma p_\Gamma \xi_2 \, d\sigma \, dt.
		\]
		By the observability inequality \eqref{obseine4.9}, the functional $\mathcal{L}$ is bounded on $\mathcal{H}$, and we have  
		\begin{align}\label{esthanban}
			|\mathcal{L}|_{\mathcal{L}(\mathcal{H};\mathbb{R})} \leq C \left( \left| \textnormal{exp}\left(Mt^{-1}\right)\xi_1 \right|_{L^2_\mathcal{F}(0,T;L^2(G))} + \left| \textnormal{exp}\left(Mt^{-1}\right)\xi_2 \right|_{L^2_\mathcal{F}(0,T;L^2(\Gamma))} \right).
		\end{align}  
		Then, using the Hahn–Banach theorem, $\mathcal{L}$ can be extended to a bounded linear functional on the whole space \( L^2_\mathcal{F}(0,T;L^2(G_0)) \times L^2_\mathcal{F}(0,T;L^2(G)) \times L^2_\mathcal{F}(0,T;L^2(\Gamma)) \). For simplicity, we still denote this extension by \(\mathcal{L}\). By \eqref{esthanban} and the Riesz representation theorem, there exist controls \( (u, v_1, v_2) \in L^2_\mathcal{F}(0,T;L^2(G_0)) \times L^2_\mathcal{F}(0,T;L^2(G)) \times L^2_\mathcal{F}(0,T;L^2(\Gamma)) \) such that
		\begin{align}\label{fireq11}
			-\mathbb{E}\int_Q p\xi_1 \,dx\,dt-\mathbb{E}\int_\Sigma p_\Gamma\xi_2 \,d\sigma\,dt=\mathbb{E}\int_{Q_0} up \,dx\,dt+\mathbb{E}\int_{Q} v_1P \,dx\,dt+\mathbb{E}\int_{\Sigma} v_2\widehat{P} \,d\sigma\,dt,
		\end{align}
		and 
		\begin{align}\label{estfocontrll}
			\begin{aligned}
				&|u|_{L^2_\mathcal{F}(0,T;L^2(G_0))}+|v_1|_{L^2_\mathcal{F}(0,T;L^2(G))}+|v_2|_{L^2_\mathcal{F}(0,T;L^2(\Gamma))}\\
				&\leq C\left(\left|\textnormal{exp}\left(Mt^{-1}\right)\xi_1\right|_{L^2_\mathcal{F}(0,T;L^2(G))}+\left|\textnormal{exp}\left(Mt^{-1}\right)\xi_2\right|_{L^2_\mathcal{F}(0,T;L^2(\Gamma))}\right).
			\end{aligned}
		\end{align}
		
		We claim that the above-obtained controls \(u\), \(v_1\), and \(v_2\) are the desired insensitizing controls for the functional \(\Phi\) given in \eqref{functioPhii1.2}. Indeed, by using Itô's formula for equations \eqref{forr4.1}-\eqref{adback4.77}, we compute $d\langle(y,y_\Gamma),(p,p_\Gamma)\rangle_{\mathbb{L}^2}$, integrating the result on $(0,T)$ and taking the expectation on both sides, we obtain that
		\begin{align}\label{eqq1.2}
			\begin{aligned}
				&\,\mathbb{E}\int_Q \xi_1 p \,dx\,dt+\mathbb{E}\int_\Sigma \xi_2 p_\Gamma \,d\sigma\,dt+\mathbb{E}\int_Q \chi_{G_0}up \,dx\,dt+\mathbb{E}\int_Q Pv_1 \,dx\,dt\\
				&+\mathbb{E}\int_\Sigma \widehat{P}v_2 \,d\sigma\,dt-\mathbb{E}\int_Q \chi_{\mathcal{O}}yq \,dx\,dt- \mathbb{E}\int_{\Sigma} \chi_{\mathcal{O}_\Gamma} y_\Gamma q_\Gamma  \,d\sigma\,dt=0.
			\end{aligned}
		\end{align}
		Again by Itô's formula for equations \eqref{back45}-\eqref{adjoforr4.8}, we get
		\begin{align}\label{eqq1.232}
			\begin{aligned}
				-\mathbb{E}\int_G z(0) q_0 \,dx-\mathbb{E}\int_\Gamma z_\Gamma(0) q_{0,\Gamma} \,d\sigma=-\mathbb{E}\int_Q \chi_{\mathcal{O}}yq \,dx\,dt- \mathbb{E}\int_{\Sigma} \chi_{\mathcal{O}_\Gamma} y_\Gamma q_\Gamma  \,d\sigma\,dt.
			\end{aligned}
		\end{align}
		From \eqref{eqq1.2} and \eqref{eqq1.232}, we have that
		\begin{align}\label{eqq6.44}
			\begin{aligned}
				\mathbb{E}\int_G z(0) q_0 \,dx+\mathbb{E}\int_\Gamma z_\Gamma(0) q_{0,\Gamma} \,d\sigma&=\mathbb{E}\int_Q \xi_1 p \,dx\,dt+\mathbb{E}\int_\Sigma \xi_2 p_\Gamma \,d\sigma\,dt+\mathbb{E}\int_{Q_0} up \,dx\,dt\\
				&\quad+\mathbb{E}\int_Q Pv_1 \,dx\,dt+\mathbb{E}\int_\Sigma \widehat{P}v_2 \,d\sigma\,dt.
			\end{aligned}
		\end{align}
		Combining  \eqref{eqq6.44} and \eqref{fireq11}, we conclude that
		$$\mathbb{E}\int_G z(0) q_0 \,dx+\mathbb{E}\int_\Gamma z_\Gamma(0) q_{0,\Gamma} \,d\sigma=0.$$
		Since $(q_0,q_{0,\Gamma})$ can be chosen arbitrarily in $L^2_{\mathcal{F}_0}(\Omega;\mathbb{L}^2)$, we deduce that 
		$$
		z(0,\cdot)=0\;\; \textnormal{in}\;\; G, \quad z_\Gamma(0,\cdot)=0 \;\; \textnormal{on}\;\;  \Gamma, \quad\mathbb{P}\textnormal{-a.s.}
		$$
		Therefore, from \eqref{estfocontrll} and Proposition~\ref{proposs11}, we conclude the proof of Theorem~\ref{thmm1.3ins}.
	\end{proof}

	\section{Conclusion and further comments}
	In this paper, we studied the insensitizing controllability problem \eqref{inspb} for the forward stochastic parabolic equation \eqref{ass15} with dynamic boundary conditions. We first reduced this problem to a classical controllability problem for the coupled system \eqref{forr4.1}-\eqref{back45} of forward-backward stochastic parabolic equations. Equivalently, we demonstrated the observability inequality \eqref{obseine4.9} for the associated adjoint coupled system \eqref{adback4.77}-\eqref{adjoforr4.8}, by proving the new Carleman estimate \eqref{carlestcasca}.
	
	The results presented in this paper can be further developed in many ways. Let us formulate this in the form of some open problems to guide future investigations.
	
	\begin{enumerate}
		\item It would be interesting to study the insensitizing control problem for the following backward stochastic parabolic equation
		\begin{equation}\label{backeqqinse}
			\begin{cases}
				\begin{array}{ll}
					dy + \Delta y \,dt = (\xi_1 + a_1 y + a_2 Y) \,dt + Y \,dW(t), & \textnormal{in } Q, \\
					dy_\Gamma + \Delta_\Gamma y_\Gamma \,dt - \partial_\nu y \,dt = (\xi_2 + b_1 y_\Gamma + b_2 \widehat{Y})\,dt + \widehat{Y} \,dW(t), & \textnormal{on } \Sigma, \\
					y_\Gamma(t,x) = y\vert_\Gamma(t,x), & \textnormal{on } \Sigma, \\
					(y, y_\Gamma)\vert_{t=T} = (y_T + \tau_1 \widehat{y}_T, y_{T,\Gamma} + \tau_2 \widehat{y}_{T,\Gamma}), & \textnormal{in } G \times \Gamma.
				\end{array}
			\end{cases}
		\end{equation}
		
		The problem is to act on the system \eqref{backeqqinse} by one (or more) controls to insensitize the following energy functional
		\begin{align*}
			\Phi(y, y_\Gamma; Y, \widehat{Y}) &= \frac{1}{2}\mathbb{E}\int_0^T\int_{\mathcal{O}_1} |y|^2 \,dx\,dt 
			+ \frac{1}{2}\mathbb{E}\int_0^T\int_{\mathcal{O}_{\Gamma1}} |y_\Gamma|^2 \,d\sigma\,dt \\
			&\quad + \frac{1}{2}\mathbb{E}\int_0^T\int_{\mathcal{O}_2} |Y|^2 \,dx\,dt 
			+ \frac{1}{2}\mathbb{E}\int_0^T\int_{\mathcal{O}_{\Gamma2}} |\widehat{Y}|^2 \,d\sigma\,dt,
		\end{align*}
		where \( (y, y_\Gamma, Y, \widehat{Y})\) is the solution of \eqref{backeqqinse}. The sets \( \mathcal{O}_1 \) and \( \mathcal{O}_2 \) are open subsets of \( G \), while \( \mathcal{O}_{\Gamma1} \) and \( \mathcal{O}_{\Gamma2} \) are open subsets of \( \Gamma \).
		\item An important direction for research would be to extend the insensitizing control problem \eqref{inspb} for equation \eqref{ass15} to the following semilinear stochastic parabolic equation
		\begin{equation*}
			\begin{cases}
				\begin{array}{ll}
					dy - \Delta y \,dt = (\xi_1 + f_1(y) + \chi_{G_0} u) \,dt + (f_2(y) + v_1) \,dW(t), & \textnormal{in } Q, \\
					dy_\Gamma - \Delta_\Gamma y_\Gamma \,dt + \partial_\nu y \,dt = (\xi_2 + g_1(y)) \,dt + (g_2(y) + v_2) \,dW(t), & \textnormal{on } \Sigma, \\
					y_\Gamma(t,x) = y\vert_\Gamma(t,x), & \textnormal{on } \Sigma, \\
					(y, y_\Gamma)\vert_{t=0} = (y_0 + \tau_1 \widehat{y}_0, y_{0,\Gamma} + \tau_2 \widehat{y}_{0,\Gamma}), & \textnormal{in } G \times \Gamma,
				\end{array}
			\end{cases}
		\end{equation*}
		where \( f_i \) and \( g_i \) (\( i=1,2 \)) are globally Lipschitz continuous functions. See \cite{Tereza2000,zhaYinGao} for some results on the deterministic setting.
		\item It is quite interesting to study the following notion of the \( \varepsilon \)-insensitivity control problem: For every \( \varepsilon > 0 \), the control triple \( (u, v_1, v_2) \) is said to \( \varepsilon \)-insensitize the functional \( \Phi \) if
		\begin{align}\label{aespinsenns}
			\left|\frac{\partial \Phi(y, y_\Gamma)}{\partial \tau_1} \Bigg|_{\tau_1 = \tau_2 = 0}\right| \leq \varepsilon 
			\quad \text{and} \quad
			\left|\frac{\partial \Phi(y, y_\Gamma)}{\partial \tau_2} \Bigg|_{\tau_1 = \tau_2 = 0}\right| \leq \varepsilon.
		\end{align}
		
		In Section \ref{sec3}, we have demonstrated that the insensitivity control problem \eqref{inspb} is equivalent to a null controllability problem, while the \( \varepsilon \)-insensitivity control problem \eqref{aespinsenns} can be shown to be equivalent to an approximate controllability property. See \cite{bodafabre95} for related results in the deterministic case.
		\item Another interesting avenue is to address the insensitizing control problem with a more complex functional \( \Phi \) involving the state of \eqref{ass15} and its gradient. For instance, we consider
		\begin{align*}
			\Phi(y, y_\Gamma) &= \frac{1}{2}\mathbb{E}\int_0^T\int_{\mathcal{O}} j(y, \nabla y) \,dx\,dt 
			+ \frac{1}{2}\mathbb{E}\int_0^T\int_{\mathcal{O}_\Gamma} j(y_\Gamma, \nabla_\Gamma y_\Gamma) \,d\sigma\,dt,
		\end{align*}
		where \( j:\mathbb{R}^{1+N}\longrightarrow\mathbb{R} \) is a given function and \( (y, y_\Gamma) \) denotes the solution of \eqref{ass15}. For some works on deterministic parabolic equations, we refer to \cite{gureSiam07, sancarrmora}.
	\end{enumerate}

\end{document}